\providecommand{\U}[1]{\protect\rule{.1in}{.1in}}
\newcommand{\R}{\mathbb{R}}
\providecommand{\U}[1]{\protect\rule{.1in}{.1in}}
\providecommand{\U}[1]{\protect\rule{.1in}{.1in}}
\newtheorem{example}{Example}[section]
\newtheorem{theorem}{Theorem}[section]
\newtheorem{defin}{Definition} [section]
\newtheorem{propo}{ Proposition}[section]
\newtheorem{coro}{Corollary} [section]
\numberwithin{equation}{section}
\begin{document}
\title[]{$\mathrm{L}^{2}_{\alpha} $-Solutions for Nonlinear Schrödinger Equations  Associated With The Weinstein Operator}
\author[Y. Bettaibi ]{}
\maketitle
\centerline{\bf Youssef Bettaibi}
\centerline{E-mail : youssef.bettaibi@yahoo.com}
\centerline{ University of Gabes, Faculty of Sciences of Gabes, LR17ES11 Mathematics }\centerline{and Applications, 6072, Gabes, Tunisia
}

\vskip 1cm

\begin{abstract}
In this paper, we study the Schrödinger equation associated with the Weinstein operators and we prove the existence and uniqueness of global solutions to Schrödinger-Weinstein equations in\\ $C\left(\left(-T_{\min }, T_{\max }\right) ; L_{\alpha}^{2}\left(\mathbb{R}^{d+1}_+\right)\right) \bigcap L_{l o c}^{q_{1}}\left(\left(-T_{\min }, T_{\max }\right) ; L_{\alpha}^{r_{1}}\left(\mathbb{R}^{d+1}_+\right)\right)$ \\ on initial condition in $L_{\alpha}^{2}\left(\mathbb{R}^{d+1}_+\right)$ 
\end{abstract}

\noindent {\bf Keywords:} {Schrödinger-Weinstein equations, Weinstein transform, Strichartz-type Schrödinger-Weinstein estimates, Dispersion phenomena } \\
{\bf 2010 AMS Classification : } {35Q55;42B35;35Q41;42B37;42B10}
\section{\textbf{Introduction}}
Weintein operators $\Delta_{W}^{\alpha,d}$ introduced by Weinsten in \cite{6} are parameterized differential operators on $\R^{d+1}_+$. Over the last few years, much attention has been paid to these operators
in various mathematical  directions and even physical for example.: The harmonic analysis associated with the Weinstein operator is studied by Ben Nahia and Ben Salem (\cite{1}\cite{2}). In particular the authors have introduced and studied the generalized Fourier transform associated with
the Weinstein operator. This transform is called the
Weinstein transform, and several authors have been interested in spaces related to this operator, in \cite{444} the authors introduced the Sobolev space associated with a Weinstein operator $H_{\mathcal{S}_{*}}^{s, \alpha}\left(\mathbb{R}_{+}^{d+1}\right)$ and investigated their properties, and in \cite{mj1} the authors  introduced the Sobolev refine the inequality between the homogeneous
Weinstein-Besov spaces $\dot{\mathcal{B}}_{p, q}^{\mathrm{s}_{, j} \beta}\left(\mathbb{R}_{+}^{d+1}\right)$ and many more, such as the homogenous Weinstein-Riesz spaces $\mathcal{R}_{\beta}^{-s}\left(L_{\beta}^{p}\left(\mathbb{R}_{+}^{d+1}\right)\right)$ and the generalized Lorentz spaces. B.Youssef \cite{bettaibi}  discuss an unique existence solutions  of Navier-Stokes equation associated with the Weinstein operator.\\

This work is devoted to define and to study the Schrödingen equations associated with the Weinstein
 operator (SW) in the whole $\mathbb{R}_{+}^{d+1}$ space of the form
\begin{equation*}\label{SW}\left( SW\right) \left\{\begin{array}{lll}
\partial_{t} u(t, x)-i \Delta_{W}^{\alpha,d} u(t, x) &=F(u(t, x)), \quad(t, x) \in I \times \mathbb{R}^{d+1}_+ \\
u_{\mid t=0} &=g \in L_{\alpha}^{2}\left(\mathbb{R}^{d+1}_+\right)
\end{array}\right.\end{equation*}where $d \geq 2, u$ is complex-valued function defined on $I \times \mathbb{R}^{d+1}_+$ and the nonlinearity $F \in C(\mathbb{C}, \mathbb{C})$ satisfies
\begin{align}\label{eq}\begin{array}{lll}
F(0)=0, \quad|F(u)-F(v)| \leq C\left(|u|^{p}+|v|^{p}\right)|u-v|, \quad p>0
\end{array}\end{align}
 We study a general theory for the above equation focusing on the following problems:\\
 *The dispersive phenomena and we prove
 the Strichartz estimates for this equation.\\
**Unique global existence of solutions for the (SW).\\

This paper is organized as follows : In section 2 we recall some elements
of harmonic analysis associated with the Weinstein operator, which will bee
needed in the sequel. Finally, Section 3 we establish the Strichartz estimates for the Weinsein-Schrödinger equation. Besides, we introduce a class of nonlinear Schrödinger equations associated with the Weinstein operators. In this regard, we study local and global well-posedness and scattering theory associated with these equations.
\section{\textbf{Harmonic analysis associated with the Weinstein-Laplace operator}}
\noindent\textbf{Notations.} In what follows, we need the following
notations:\newline$\bullet\;\mathbb{R}_{+}^{d+1}=\mathbb{R}^{d}\times\left]
0,\infty\right[  $.
\newline$\bullet\;x=(x_{1},...,x_{d},x_{d+1}%
)=(x^{\prime},x_{d+1})\in\mathbb{R}_{+}^{d+1}$
\newline$\bullet\; \vert x\vert=\sqrt{x_1^2+x_2^2+...+x_{d+1}^2}$.
\newline$\bullet\;\mathscr
C_{\ast}(\mathbb{R}^{d+1}),\;$the space of continuous functions on
$\mathbb{R}^{d+1}$, even with respect to the last variable.\newline%
$\bullet\;\mathscr C_{\ast,c}(\mathbb{R}^{d+1}),\;$the space of continuous
functions on $\mathbb{R}^{d+1}$ with compact support, even with respect to the
last variable.\newline$\bullet\;\mathscr C_{\ast}^{p}(\mathbb{R}^{d+1}),\;$the
space of functions of class $C^{p}$ on $\mathbb{R}^{d+1}$, even with respect
to the last variable.\newline$\bullet\;\mathscr E_{\ast}(\mathbb{R}^{d+1}%
),\;$the space of $C^{\infty}$-functions on $\mathbb{R}^{d+1}$, even with
respect to the last variable.\newline$\bullet\;\mathscr S_{\ast}%
(\mathbb{R}^{d+1}),\;$the Schwartz space of rapidly decreasing functions on
$\mathbb{R}^{d+1}$, even with respect to the last variable.\newline%
$\bullet\;\mathscr D_{\ast}(\mathbb{R}^{d+1}),\;$the space of $C^{\infty}%
$-functions on $\mathbb{R}^{d+1}$ which are of compact support, even with
respect to the last variable.\newline
\newline$\bullet\;L_{\alpha}%
^{p}(\mathbb{R}_{+}^{d+1}),$ $1\leq p\leq+\infty,\;$the space of measurable
functions on $\mathbb{R}_{+}^{d+1}$ such that
\[%
\begin{array}
[c]{lll}%
\Vert f\Vert_{\alpha,p} & = & \left[  \int_{\mathbb{R}_{+}^{d+1}}%
|f(x)|^{p}d\mu_{\alpha,d}(x)\right]  ^{\frac{1}{p}}<+\infty,\text{ if }1\leq
p<+\infty,\\
&  & \\
\Vert f\Vert_{\alpha,\infty} & = & \mathrm{ess}\underset{x\in\mathbb{R}%
	_{+}^{d+1}}{\sup}\left\vert f(x)\right\vert <+\infty,
\end{array}
\]
where $\mu_{\alpha,d}$ is the measure defined on $\mathbb{R}_{+}^{d+1}$ by
\begin{equation}
d\mu_{\alpha,d}(x)=\frac{x_{d+1}^{2\alpha+1}}{\left(  2\pi\right)  ^{\frac
		{d}{2}}2^{\alpha}\Gamma(\alpha+1)}dx, \label{1.2}%
\end{equation}
and $dx$ is the Lebesgue measure on $\mathbb{R}^{d+1}$. \newline%

In this section, we shall collect some results and definitions from the theory
of the harmonic analysis associated with the Weinstein operator developed in
\cite{bn}.\newline

The Weinstein operator $\Delta_{W}^{\alpha,d}$ is defined on $\mathbb{R}%
_{+}^{d+1}=\mathbb{R}^{d}\times\left]  0,\ +\infty\right[  ,$ by:
\begin{equation}
\label{1.1}\Delta_{W}^{\alpha,d}=\sum_{i=1}^{d+1}\frac{\partial^{2}}{\partial
	x_{i}^{2} }+\frac{2\alpha+1}{x_{d+1}}\frac{\partial}{\partial x_{d+1}}%
=\Delta_{d}+L_{\alpha},~~\alpha>-\frac{1}{2},
\end{equation}
where $\Delta_{d}$ is the Laplacian for the $d$ first variables and
$L_{\alpha}$ is the Bessel operator for the last variable defined on $\left]
0,\ +\infty\right[  $ by :
\[
L_{\alpha}u=\frac{\partial^{2}u}{\partial x_{d+1}^{2}}+\frac{2\alpha
	+1}{x_{d+1}}\frac{\partial u}{\partial x_{d+1}}=\frac{1}{x_{d+1}^{2\alpha+1}
}\frac{\partial}{\partial x_{d+1}}\left[  x_{d+1}^{2\alpha+1}\frac{\partial
	u}{\partial x_{d+1}}\right]  .
\]
The Weinstein operator $\Delta_{W}^{\alpha,d}$, mostly referred to as the
Laplace-Bessel differential operator is now known as an important operator in
analysis. The relevant harmonic analysis associated with the Bessel
differential operator$L_{\alpha}$ goes back to S. Bochner, J. Delsarte, B.M.
Levitan and has been studied by many other authors such as J. L\"{o}fstr\"{o}m
and J. peetre \cite{Lof}, I. Kipriyanov \cite{Kip}, K.
Trim\`{e}che \cite{trim}, I.A. Aliev and B. Youssef \cite{Aliev2}.

Let us begin by the following result, which gives the eigenfunction
$\Psi_{\lambda}^{\alpha,d}$ of the Weinstein operator $\Delta_{W}^{\alpha,d}.$

\begin{propo}
	For all $\lambda=\left(  \lambda_{1},\lambda_{2},...,\lambda_{d+1}\right)
	\in\mathbb{C}^{d+1}$, the system
	\begin{equation}
	\left\{
	\begin{array}{lll}%
	\frac{\partial^{2}u}{\partial x_{j}^{2}}\left(  x\right)  =-\lambda_{j}%
	^{2}u(x),&\text{ if }~~~1\leq j\leq d\\
	L_{\alpha}u\left(  x\right)  =-\lambda_{d+1}^{2}u\left(  x\right)  ,\\
	u\left(  0\right)  =1,\;\frac{\partial u}{\partial x_{d+1}}(0)=0\\ \frac{\partial u}{\partial x_{j}}(0)=-i\lambda_{j},~~ &if ~~~1\leq j\leq
	d.
	\end{array}
	\right.  \label{2.1}%
	\end{equation}
	has a unique solution $\Psi^{\alpha}_{d} ({\lambda},.)\;$given by :
	\begin{equation}
	\forall z\mathbb{\in C}^{d+1},\;\Psi_{\lambda}^{\alpha,d}\left(  z\right)
	=e^{-i\left\langle z^{\prime}\text{,}\lambda^{\prime}\right\rangle }j_{\alpha
	}(\lambda_{d+1}z_{d+1}), \label{2.2}%
	\end{equation}
	where $z=(z^{\prime},x_{d+1}),\;z^{\prime}=\left(  z_{1},z_{2},...,z_{d}%
	\right)  $ and $j_{\alpha}$ is the normalized Bessel function of index
	$\alpha,\;$defined by
	$$
	\forall\xi\mathbb{\in C},\;j_{\alpha}(\xi)=\Gamma(\alpha+1)\underset{n=0}%
	{\sum^{\infty}}\frac{(-1)^{n}}{n!\Gamma(n+\alpha+1)}(\frac{\xi}{2})^{2n}.
	$$	
\end{propo}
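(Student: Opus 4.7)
The plan is to establish both existence and uniqueness via separation of variables, which is natural here because the operator $\Delta_W^{\alpha,d}$ splits as $\Delta_d + L_\alpha$ acting on disjoint groups of variables. I would first seek a solution of the product form $u(x) = v(x')\,w(x_{d+1})$ with $v(0)=1$, $w(0)=1$, $w'(0)=0$, and $\partial v/\partial x_j(0) = -i\lambda_j$. Substituting into the first $d$ equations reduces them to one-dimensional ODEs $v_j''(x_j) = -\lambda_j^2 v_j(x_j)$ with $v_j(0) = 1$ and $v_j'(0) = -i\lambda_j$, whose unique solution is $v_j(x_j) = e^{-i\lambda_j x_j}$; taking a tensor product yields $v(x') = e^{-i\langle x',\lambda'\rangle}$.

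Next I would focus on the Bessel equation $L_\alpha w(x_{d+1}) = -\lambda_{d+1}^2 w(x_{d+1})$ with $w(0)=1$, $w'(0)=0$. Because $L_\alpha$ has a regular singular point at the origin when $\alpha > -1/2$, I would apply the Frobenius method: looking for a power-series solution $w(t) = \sum_{n \geq 0} a_n t^n$ and inserting it into $L_\alpha w = -\lambda_{d+1}^2 w$ gives the recurrence $a_{n+2} = -\lambda_{d+1}^2\, a_n / \bigl((n+2)(n+2+2\alpha+2)\bigr)$ (up to indexing), which with $a_0 = 1$ and $a_1 = 0$ (forced by $w'(0)=0$) produces exactly the series
\[
w(t) = \Gamma(\alpha+1) \sum_{n=0}^{\infty} \frac{(-1)^n}{n!\,\Gamma(n+\alpha+1)} \left(\frac{\lambda_{d+1} t}{2}\right)^{2n} = j_\alpha(\lambda_{d+1} t).
\]
Convergence of this series on all of $\mathbb{C}$ is straightforward from the ratio test.

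For uniqueness, the first $d$ equations pose no difficulty (standard Cauchy--Lipschitz), so the real obstacle is uniqueness for the Bessel equation at the singular point $x_{d+1}=0$. I would handle this either by rewriting the equation in conservative form $(t^{2\alpha+1} w')' = -\lambda_{d+1}^2 t^{2\alpha+1} w$ and integrating twice to obtain the Volterra integral equation
\[
w(t) = 1 - \lambda_{d+1}^2 \int_0^t s^{-(2\alpha+1)} \int_0^s r^{2\alpha+1} w(r)\, dr\, ds,
\]
whose kernel is integrable for $\alpha > -1/2$ and gives uniqueness by a contraction argument on small intervals together with continuation; or, alternatively, by invoking the Frobenius theory which shows that the second independent solution behaves like $t^{-2\alpha}$ near $0$ (or like $\log t$ in the resonant case) and is therefore excluded by the condition $w'(0)=0$ combined with the requirement of a solution regular at the origin.

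Finally, to close the argument I would verify directly that $\Psi_\lambda^{\alpha,d}(z) = e^{-i\langle z',\lambda'\rangle} j_\alpha(\lambda_{d+1} z_{d+1})$ satisfies all four items in \eqref{2.1}: the boundary conditions are immediate from $j_\alpha(0)=1$ and $j_\alpha'(0)=0$, while the differential identities follow since $e^{-i\lambda_j x_j}$ is an eigenfunction of $\partial^2/\partial x_j^2$ with eigenvalue $-\lambda_j^2$, and $j_\alpha(\lambda_{d+1} t)$ satisfies $L_\alpha j_\alpha(\lambda_{d+1} \cdot)(t) = -\lambda_{d+1}^2 j_\alpha(\lambda_{d+1} t)$ by the defining property of the normalized Bessel function. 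The hardest step is the uniqueness argument near the Bessel singularity; the rest is essentially bookkeeping.
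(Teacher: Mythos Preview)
The paper does not actually supply a proof of this proposition: it is stated as a background fact, with the harmonic analysis attributed to the references \cite{bn,bn1}. So there is no ``paper's proof'' to compare against; your argument stands on its own and is the standard one.

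Your existence argument is correct. The separation-of-variables ansatz, the one-dimensional Cauchy problems for $j\le d$, and the Frobenius computation for the Bessel part all go through as you describe (the recurrence should read $a_{n+2}=-\lambda_{d+1}^{2}a_{n}/\bigl((n+2)(n+2+2\alpha)\bigr)$, but you flagged this with ``up to indexing'').

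There is one genuine soft spot in your uniqueness discussion. You write that ``the first $d$ equations pose no difficulty (standard Cauchy--Lipschitz)'', but the data in \eqref{2.1} are prescribed only at the single point $x=0$, not along the hyperplanes $x_j=0$, so Cauchy--Lipschitz in each $x_j$ separately does not immediately apply. What actually forces uniqueness is the \emph{overdetermined} nature of the system: from $\partial_{x_1}^{2}u=-\lambda_1^{2}u$ one gets $u=A(\hat x)e^{-i\lambda_1 x_1}+B(\hat x)e^{i\lambda_1 x_1}$, and then the remaining equations (which commute with the first) force $A,B$ themselves to satisfy the same system in one fewer variable; iterating, the point conditions at $0$ eventually pin everything down. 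Equivalently, one can argue via Taylor coefficients at the origin: the $d+1$ equations give recurrences that determine every coefficient from the finitely many fixed by the initial conditions. Either of these patches your argument; without one of them, you have only shown uniqueness \emph{among product-form solutions}, not among all entire solutions. Your treatment of the Bessel singularity (Volterra integral equation or indicial analysis excluding the $t^{-2\alpha}$ branch) is fine once this reduction is in place.
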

\begin{propo}
	~~\newline i) For all $\lambda,\;z\in\mathbb{C}^{d+1}$ and $t\in\mathbb{R}$,
	we have
	\[
\Psi^{\alpha}_{d}\left(  \lambda,0\right)  =1,\;\Psi^{\alpha}_{d}\left(
	\lambda,z\right)  =\Psi^{\alpha}_{d}\left(  z,\lambda\right)  \;\text{and
	}\Psi^{\alpha}_{d}\left(  \lambda,tz\right)  =\Psi^{\alpha}_{d}\left(
	t\lambda,z\right)  .
	\]
	ii) For all $\nu\in\mathbb{N}^{d+1},\;x\in\mathbb{R}_{+}^{d+1}$ and
	$z\in\mathbb{C}^{d+1}$, we have
	\begin{equation}
	\label{2.3}|D_{z}^{\nu}\Psi^{\alpha}_{d}(x,z)|\leq\|x\|^{|\nu|}%
	\,\exp(\|x\|\,\|\operatorname{Im}z\|),
	\end{equation}
	where $D_{z}^{\nu}=\frac{\partial^{\nu}}{\partial z_{1}^{\nu_{1}}...\partial
		z_{d+1}^{\nu_{d+1}}}$ and $|\nu|=\nu_{1}+...+\nu_{d+1}.$ In particular
	\begin{equation}
	\label{2.4}\forall x,y\in\mathbb{R}_{+}^{d+1},\;|\Psi^{\alpha}_{d}%
	(x,y)|\leq1.
	\end{equation}
	
\end{propo}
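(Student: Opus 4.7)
The plan is to read everything off the explicit formula
$\Psi^{\alpha}_{d}(\lambda,z)=e^{-i\langle z',\lambda'\rangle}\,j_{\alpha}(\lambda_{d+1}z_{d+1})$
provided by (2.2), together with a standard Poisson-type integral representation of $j_{\alpha}$. For part (i), all three identities are immediate verifications. Setting $z=0$ gives $e^{0}\,j_{\alpha}(0)$, and $j_{\alpha}(0)=1$ is the $n=0$ term of the series defining $j_{\alpha}$. The symmetry $\Psi^{\alpha}_{d}(\lambda,z)=\Psi^{\alpha}_{d}(z,\lambda)$ is inherited from the symmetry of the inner product $\langle z',\lambda'\rangle=\langle\lambda',z'\rangle$ and of the scalar product $\lambda_{d+1}z_{d+1}$; the scaling relation $\Psi^{\alpha}_{d}(\lambda,tz)=\Psi^{\alpha}_{d}(t\lambda,z)$ follows by moving the real scalar $t$ from the $z$ slot to the $\lambda$ slot in both factors.

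For part (ii) I would differentiate the product form (2.2) factor by factor. Each $\partial/\partial z_{j}$ with $1\le j\le d$ brings down a $-ix_{j}$ from the exponential, while the $\nu_{d+1}$ derivatives in $z_{d+1}$ act on $j_{\alpha}(x_{d+1}z_{d+1})$, producing a factor $x_{d+1}^{\nu_{d+1}}\,j_{\alpha}^{(\nu_{d+1})}(x_{d+1}z_{d+1})$. Hence
\[
D_{z}^{\nu}\Psi^{\alpha}_{d}(x,z)=\Bigl(\prod_{j=1}^{d}(-ix_{j})^{\nu_{j}}\Bigr)\,x_{d+1}^{\nu_{d+1}}\,e^{-i\langle x',z'\rangle}\,j_{\alpha}^{(\nu_{d+1})}(x_{d+1}z_{d+1}).
\]
The exponential is controlled by $|e^{-i\langle x',z'\rangle}|=e^{\langle x',\mathrm{Im}\,z'\rangle}\le e^{\|x'\|\,\|\mathrm{Im}\,z'\|}$ via Cauchy--Schwarz. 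For the Bessel factor I would use the representation $j_{\alpha}(\xi)=c_{\alpha}\int_{0}^{1}(1-t^{2})^{\alpha-1/2}\cos(\xi t)\,dt$, valid for $\alpha>-1/2$ with $c_{\alpha}$ chosen so that $j_{\alpha}(0)=1$, differentiate $k$ times under the integral, and estimate $|\cos^{(k)}(\xi t)|\le e^{t|\mathrm{Im}\,\xi|}$ together with $t^{k}\le 1$ on $[0,1]$ to conclude $|j_{\alpha}^{(k)}(\xi)|\le e^{|\mathrm{Im}\,\xi|}$ for every $k\ge 0$.

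Putting the pieces together, $\prod_{j=1}^{d+1}|x_{j}|^{\nu_{j}}\le \|x\|^{|\nu|}$ handles the polynomial factor, and applying Cauchy--Schwarz to the two vectors $(\|x'\|,x_{d+1})$ and $(\|\mathrm{Im}\,z'\|,|\mathrm{Im}\,z_{d+1}|)$ in $\mathbb{R}^{2}$ collapses the two exponential contributions into the single factor $e^{\|x\|\,\|\mathrm{Im}\,z\|}$, which yields (2.3). The uniform bound (2.4) is just the specialisation to $\nu=0$ and $z=y\in\mathbb{R}^{d+1}_{+}$, since then $\mathrm{Im}\,y=0$. The only genuinely non-trivial ingredient is the derivative estimate $|j_{\alpha}^{(k)}(\xi)|\le e^{|\mathrm{Im}\,\xi|}$; this is the main technical step, and an alternative would be to majorise the (entire) series for $j_{\alpha}^{(k)}$ term by term, but the integral argument is cleaner and uniform in $k$.
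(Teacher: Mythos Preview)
Your argument is correct and self-contained. Note, however, that the paper does not actually supply a proof of this proposition: it appears in the preliminaries section, which merely ``collect[s] some results and definitions'' from the literature on Weinstein harmonic analysis (see the references \cite{bn,bn1}), and the statement is given without argument. Your write-up therefore furnishes a proof where the paper only cites one. The route you take --- reading off (i) directly from the product formula (2.2), and for (ii) combining the trivial bound on the plane-wave factor with the Poisson integral representation of $j_{\alpha}$ (valid precisely because $\alpha>-\frac{1}{2}$) to get $|j_{\alpha}^{(k)}(\xi)|\le e^{|\mathrm{Im}\,\xi|}$, then packaging the two exponentials via Cauchy--Schwarz in $\mathbb{R}^{2}$ --- is the standard argument in the Bessel/Weinstein setting, so there is nothing to compare against and nothing to correct.
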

\begin{defin}
	The Weinstein transform is given for $f\in L_{\alpha}^{1}(\mathbb{R}_{+}%
	^{d+1})$ by
	\begin{equation}
	\forall\lambda\in\mathbb{R}_{+}^{d+1},\;\mathscr F_{W}^{\alpha,d}%
	(f)(\lambda)=\int_{\mathbb{R}_{+}^{d+1}}f(x)\Psi^{\alpha}_{d}(x,\lambda
	)d\mu_{\alpha,d}(x). \label{2.10}%
	\end{equation}
	where $\mu_{\alpha,d}$ is the measure on $\mathbb{R}_{+}^{d+1}$ given by the
	relation (\ref{1.2}).
\end{defin}

The Weinstein tansform, referred to as the Fourier-Bessel transform, has been
investigated by I. Kipriyanov \cite{Kip}, I.A. Aliev \cite{Aliev1} and others
\newline( see \cite{Aliev3},\cite{bn},\cite{ bn1} and \cite{bet} ).

Using the properties of the classical Fourier transform on $\mathbb{R}^{d}$
and of the Bessel transform, one can easily see the following relation, which
will play an important role in the sequel.

\begin{example}\label{ex}
1)	Let $E_{s},\;s>0,$ be the function defined by
	\[
	\forall x\in\mathbb{R}^{d+1},\;E_{s}\left(  x\right)  =e^{-s\left\vert
		x\right\vert ^{2}}.
	\]
	Then the Weinstein transform $\mathscr F_{W}^{\alpha,d}$ of $E_{s}$ is given
	by :
	\begin{equation}\label{2.11}
	\forall\lambda\in\mathbb{R}_{+}^{d+1},\;\mathscr F_{W}^{\alpha,d}%
	(E_{s})(\lambda)=\frac{1}{\left(  2s\right)  ^{\alpha+\frac{d}{2}+1}}%
	e^{-\frac{\left\vert \lambda\right\vert ^{2}}{4s}}. %
	\end{equation}
From (\ref{2.11}), we have for any positive $s$
\begin{equation}
,\forall\lambda\in\mathbb{R}_{+}^{d+1}\;\mathscr F_{W}^{\alpha,d}
(E_{s})(\lambda)=\frac{1}{\left(  2s\right)  ^{\alpha+\frac{d}{2}+1}}%
e^{-\frac{\left\vert \lambda\right\vert ^{2}}{4s}}. %
\end{equation}
Let us observe that for $z \in \mathbb{C}, z=|z| e^{i \theta}$ with positive real part, by taking the branch $z^{\frac{1}{2}}=|z|^{\frac{1}{2}} e^{i \frac{\theta}{2}}$, the two functions
\end{example}
$$
z \mapsto \mathscr F_{W}^{\alpha,d}
(E_{z}) \quad \text { and } \quad z \mapsto \frac{1}{\left(  2z\right)  ^{\alpha+\frac{d}{2}+1}}%
e^{-\frac{\left\vert .\right\vert ^{2}}{4z}}
$$
are holomorphic on the domain $\operatorname{Re} z>0$. As they coincide on the real axis, they coincide in the whole domain. Now, if $t$ positive, considering a sequence of $z_{n}$ with positive real part which tends to $i t$, we get, as the Weinstein transform is continuous on tempered distributions that

\begin{align}
\forall\lambda\in\mathbb{R}_{+}^{d+1},\quad  \mathscr F_{W}^{\alpha,d}(E_{is})(\lambda)&=
\frac{1}{\left(  2is\right)  ^{\alpha+\frac{d}{2}+1}}%
e^{i\frac{\left\vert \lambda\right\vert ^{2}}{4s}}\\&=\frac{1}{\left(  2s\right)  ^{\alpha+\frac{d}{2}+1}}e^{-i\frac{\pi}{2}(\alpha+\frac{d}{2}+1)}
e^{i\frac{\left\vert \lambda\right\vert ^{2}}{4s}}
\end{align}
If $s\in \R $ with $s\neq 0$, we have 

\begin{align}
\forall\lambda\in\mathbb{R}_{+}^{d+1},\quad  \mathscr F_{W}^{\alpha,d}(E_{is})(\lambda)=\frac{1}{\left\vert  2s\right\vert  ^{\alpha+\frac{d}{2}+1}}e^{-i\frac{\pi}{2}(\alpha+\frac{d}{2}+1) \operatorname{sgn}(s) }
e^{i\frac{\left\vert \lambda\right\vert ^{2}}{4s}}
\end{align}

\noindent Some basic properties of the transform $\mathscr F_{W}^{\alpha,d}$
are summarized in the following results. For the proofs, we refer to \cite{bn,
	bn1}.

\begin{propo}\label{l1}
	(see \cite{bn, bn1})~~\newline i) For all $f\in L_{\alpha}^{1}(\mathbb{R}%
	_{+}^{d+1})$, we have
	\begin{equation}
	\Vert\mathscr F_{W}^{\alpha,d}(f)\Vert_{\alpha,\infty}\leq\Vert f\Vert
	_{\alpha,1}.\label{2.12}%
	\end{equation}
	ii) For all $f\in L_{\alpha}^{1}(\mathbb{R}%
	_{+}^{d+1})$ and $\Delta_{W}^{\alpha,d}f\in L_{\alpha}^{1}(\mathbb{R}%
	_{+}^{d+1})$  we have
	\begin{align} \label{xx}
	\mathscr F_{W}^{\alpha,d}(\Delta_{W}^{\alpha,d}f)(x)=-\vert x\vert^2\mathscr F_{W}^{\alpha,d}(f)(x)
	\end{align}
\end{propo}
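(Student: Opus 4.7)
\bigskip
\noindent\textbf{Proof plan for Proposition \ref{l1}.}

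Part (i) is essentially immediate from the definition of $\mathscr F_{W}^{\alpha,d}$ together with the pointwise bound (\ref{2.4}). I would simply estimate, for an arbitrary $\lambda\in\mathbb{R}_+^{d+1}$,
\[
\left|\mathscr F_{W}^{\alpha,d}(f)(\lambda)\right|
\leq \int_{\mathbb{R}_{+}^{d+1}} |f(x)|\,\bigl|\Psi^{\alpha}_{d}(x,\lambda)\bigr|\,d\mu_{\alpha,d}(x)
\leq \int_{\mathbb{R}_{+}^{d+1}} |f(x)|\,d\mu_{\alpha,d}(x)=\|f\|_{\alpha,1},
\]
and then take the essential supremum over $\lambda$. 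Measurability of $\lambda\mapsto\mathscr F_{W}^{\alpha,d}(f)(\lambda)$ is standard (dominated convergence along a countable dense sequence).

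For part (ii) the key input is the eigenfunction relation. Writing the operator as $\Delta_W^{\alpha,d}=\Delta_d+L_\alpha$ and using the explicit form (\ref{2.2}) of $\Psi^{\alpha}_{d}$, a direct computation (applying $\Delta_d$ to $e^{-i\langle z',\lambda'\rangle}$ and $L_\alpha$ to $j_\alpha(\lambda_{d+1}z_{d+1})$) shows that, in the first argument,
\[
\Delta_W^{\alpha,d}\Psi^{\alpha}_{d}(\,\cdot\,,\lambda)(x)=-\bigl(|\lambda'|^2+\lambda_{d+1}^2\bigr)\Psi^{\alpha}_{d}(x,\lambda)=-|\lambda|^2\,\Psi^{\alpha}_{d}(x,\lambda).
\]
The second ingredient is that $\Delta_W^{\alpha,d}$ is formally self-adjoint with respect to $d\mu_{\alpha,d}$: the Laplacian $\Delta_d$ is self-adjoint on $\mathbb{R}^d$, while $L_\alpha$ is in Sturm--Liouville form $L_\alpha u=x_{d+1}^{-(2\alpha+1)}\partial_{x_{d+1}}[x_{d+1}^{2\alpha+1}\partial_{x_{d+1}}u]$, which is exactly the form that makes it symmetric against the weight $x_{d+1}^{2\alpha+1}$ appearing in (\ref{1.2}). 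Integrating by parts twice then gives, for nice enough $f$,
\[
\mathscr F_{W}^{\alpha,d}(\Delta_W^{\alpha,d}f)(\lambda)
=\int_{\mathbb{R}_+^{d+1}} f(x)\,\Delta_W^{\alpha,d}\Psi^{\alpha}_{d}(\,\cdot\,,\lambda)(x)\,d\mu_{\alpha,d}(x)
=-|\lambda|^2\,\mathscr F_{W}^{\alpha,d}(f)(\lambda),
\]
which is the claimed identity (\ref{xx}).

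The main obstacle is that the hypothesis is only $f,\,\Delta_W^{\alpha,d}f\in L_\alpha^1$, so a naive integration by parts is not justified directly (boundary terms, differentiability of $f$, and exchange of limits all need care). I would handle this by first proving the identity on the dense subspace $\mathscr S_\ast(\mathbb{R}^{d+1})$, where all boundary terms vanish thanks to the factor $x_{d+1}^{2\alpha+1}$ (which kills the boundary contribution at $x_{d+1}=0$) and to rapid decay at infinity, and then extending to the stated class by regularization. Concretely, mollify $f$ by convolution with an approximate identity built from the Weinstein translation (so that $\Delta_W^{\alpha,d}$ commutes with the mollification), obtain the identity $\mathscr F_{W}^{\alpha,d}(\Delta_W^{\alpha,d}f_\varepsilon)=-|\lambda|^2\mathscr F_W^{\alpha,d}(f_\varepsilon)$, and then pass to the limit $\varepsilon\to 0$ using part (i) together with the $L^1$-convergences $f_\varepsilon\to f$ and $\Delta_W^{\alpha,d}f_\varepsilon\to\Delta_W^{\alpha,d}f$. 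An equivalent and somewhat cleaner alternative is to interpret (\ref{xx}) in the sense of tempered even distributions, where the eigenfunction identity for $\Psi^{\alpha}_{d}$ plus the definition of $\mathscr F_W^{\alpha,d}$ on $\mathscr S_\ast'(\mathbb{R}^{d+1})$ gives the result essentially for free, and both sides are then genuine $L^\infty$ functions by part (i).
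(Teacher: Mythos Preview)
Your argument is correct and is exactly the standard one; there is nothing to object to in either part. Note, however, that the paper does not give its own proof of this proposition at all: it is stated as a known fact with the citation ``(see \cite{bn, bn1})'' and no argument is supplied. So there is no ``paper's proof'' to compare against --- you have effectively written out the proof that the cited references contain, namely the trivial $L^\infty$ bound from $|\Psi^{\alpha}_{d}|\le 1$ for (i), and for (ii) the eigenfunction identity for $\Psi^{\alpha}_{d}$ combined with the symmetry of $\Delta_W^{\alpha,d}$ against $d\mu_{\alpha,d}$, first on $\mathscr S_\ast$ and then by density.
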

\begin{theorem}
	(see \cite{bn, bn1})~~\newline i\textbf{) }The Weinstein transform $\mathscr
	F_{W}^{\alpha,d}$ is a topological isomorphism \ from $\mathscr S_{\ast
	}(\mathbb{R}^{d+1})$ onto itself and from $\mathscr D_{\ast}(\mathbb{R}%
	^{d+1})$ onto $\mathcal{H}_{\ast}(\mathbb{C}^{d+1}\mathbb{)}$.\newline ii) Let
	$f\;\in\mathscr S_{\ast}(\mathbb{R}^{d+1})$. The inverse transform $\left(
	\mathscr F_{W}^{\alpha,d}\right)  ^{-1}\;$is given by
	\begin{equation}
	\forall x\in\mathbb{R}_{+}^{d+1},\;\left(  \mathscr F_{W}^{\alpha,d}\right)
	^{-1}(f)(x)=\mathscr F_{W}^{\alpha,d}(f)\left(  -x\right)  .\label{2.16}%
	\end{equation}
	iii) Let $f\in L_{\alpha}^{1}(\mathbb{R}_{+}^{d+1})$. If $\mathscr
	F_{W}^{\alpha,d}(f)\in L_{\alpha}^{1}(\mathbb{R}_{+}^{d+1}),$ then we have
	\begin{equation}
	f(x)=\int_{\mathbb{R}_{+}^{d+1}}\mathscr F_{W}^{\alpha,d}(f)\left(  y\right)
\Psi^{\alpha}_{d}(-x,y)d\mu_{\alpha,d}(y),\;a.e\;x\in\mathbb{R}_{+}%
	^{d+1}.\label{2.17}%
	\end{equation}
	
\end{theorem}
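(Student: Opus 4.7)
The plan is to reduce the theorem to the classical Fourier and Hankel--Bessel inversion theorems by exploiting the product structure of the kernel $\Psi^\alpha_d(x,\lambda)=e^{-i\langle x',\lambda'\rangle}j_\alpha(\lambda_{d+1}x_{d+1})$ displayed in (2.2). Since the measure $d\mu_{\alpha,d}$ factors (up to a constant) as $dx'\otimes x_{d+1}^{2\alpha+1}dx_{d+1}$, the Weinstein transform decomposes as $\mathscr F_{W}^{\alpha,d}=\mathcal{B}_\alpha\circ\mathcal{F}_{x'}$, where $\mathcal{F}_{x'}$ is the classical Fourier transform in the first $d$ variables and $\mathcal{B}_\alpha$ is the normalized Bessel (Hankel) transform in the last variable, acting on functions that are even in $x_{d+1}$.

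For part (i), I would first note that $\mathcal{F}_{x'}$ preserves $\mathscr S_{\ast}(\mathbb{R}^{d+1})$ since it commutes with reflection in $x_{d+1}$. Then I would invoke the classical fact that $\mathcal{B}_\alpha$ is a topological isomorphism of the even Schwartz space onto itself, which rests on the identity $\mathcal{B}_\alpha(L_\alpha u)(\lambda)=-\lambda^{2}\mathcal{B}_\alpha(u)(\lambda)$ combined with integration by parts against powers of $L_\alpha$ to produce rapid decay. Composition of these two isomorphisms yields the statement on $\mathscr S_{\ast}(\mathbb{R}^{d+1})$. The second half, concerning $\mathscr D_{\ast}$ and $\mathcal{H}_{\ast}(\mathbb{C}^{d+1})$, follows by pairing the classical Paley--Wiener theorem for $\mathcal{F}_{x'}$ with its Bessel analogue: the holomorphic extension of $\Psi^{\alpha}_{d}$ together with the bound (2.3) shows that the image consists of entire functions of exponential type matching the support of $f$, and the converse is the standard Paley--Wiener reconstruction.

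For part (ii), since $j_\alpha$ is even we have $\Psi^{\alpha}_{d}(x,-\lambda)=\Psi^{\alpha}_{d}(-x,\lambda)$. Classical Fourier inversion on $\mathbb{R}^{d}$ gives $(\mathcal{F}_{x'})^{-1}g=\mathcal{F}_{x'}(g)(-\,\cdot\,)$, while the Hankel inversion theorem gives $\mathcal{B}_{\alpha}^{-1}=\mathcal{B}_{\alpha}$ on the even Schwartz line. Multiplying the two identities, (2.16) follows at once.

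For part (iii), the strategy is a Gaussian regularization based on Example \ref{ex}. For $s>0$ set
\[
f_{s}(x):=\int_{\mathbb{R}_{+}^{d+1}}\mathscr F_{W}^{\alpha,d}(f)(y)\,E_{s}(y)\,\Psi^{\alpha}_{d}(-x,y)\,d\mu_{\alpha,d}(y).
\]
Using Fubini together with the transfer identity $\int f\cdot\mathscr F_{W}^{\alpha,d}(\varphi)\,d\mu_{\alpha,d}=\int\mathscr F_{W}^{\alpha,d}(f)\cdot\varphi\,d\mu_{\alpha,d}$ (itself a direct consequence of the product decomposition and the classical Plancherel/self-adjointness identities), one rewrites $f_{s}$ as the Weinstein convolution of $f$ with the explicit Gaussian heat kernel produced by (2.11). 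As $s\to 0^{+}$, this approximation of the identity forces $f_{s}\to f$ almost everywhere and in $L_{\alpha}^{1}$. On the other hand, the hypothesis $\mathscr F_{W}^{\alpha,d}(f)\in L_{\alpha}^{1}$ and the uniform bound $|E_{s}(y)\Psi^{\alpha}_{d}(-x,y)|\leq 1$ allow dominated convergence, yielding $f_{s}(x)\to\int \mathscr F_{W}^{\alpha,d}(f)(y)\Psi^{\alpha}_{d}(-x,y)d\mu_{\alpha,d}(y)$. Matching the two limits gives (2.17). The main obstacle is the bookkeeping required to transport the one-dimensional Bessel Paley--Wiener and inversion results into the product setting and to pin down the precise exponential-type growth defining $\mathcal{H}_{\ast}(\mathbb{C}^{d+1})$; the regularization argument in (iii) is routine once the Gaussian identity of Example \ref{ex} is available.
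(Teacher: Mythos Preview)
The paper does not supply a proof of this theorem at all: it is stated as a known result with the parenthetical ``(see \cite{bn, bn1})'' and nothing further. So there is no in-paper argument to compare against.

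Your sketch is the standard route and is correct in outline. The factorisation $\mathscr F_{W}^{\alpha,d}=\mathcal{B}_\alpha\circ\mathcal{F}_{x'}$ induced by the product kernel (2.2) and the tensor structure of $d\mu_{\alpha,d}$ is exactly how these results are obtained in the cited references: part (i) then reduces to the classical Fourier isomorphism on $\mathscr S(\mathbb{R}^d)$ together with the Hankel isomorphism on the even Schwartz space and the Bessel Paley--Wiener theorem; part (ii) follows from the evenness of $j_\alpha$ and ordinary Fourier inversion; and part (iii) is the usual approximate-identity argument, for which your Gaussian regularisation via Example~\ref{ex} is one clean implementation. The only point to be careful about is the definition of $\mathcal{H}_\ast(\mathbb{C}^{d+1})$, which the present paper never spells out; you would need to import that from \cite{bn, bn1} before the Paley--Wiener half of (i) can even be formulated precisely.
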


\begin{theorem}
	(see \cite{bn, bn1})~~\newline i) For all $f,g\in\mathscr S_{\ast}%
	(\mathbb{R}^{d+1}),$ we have the following Parseval formula
	\begin{equation}
	\int_{\mathbb{R}_{+}^{d+1}}f(x)\overline{g(x)}d\mu_{\alpha,d}(x)=\int
	_{\mathbb{R}_{+}^{d+1}}\mathscr F_{W}^{\alpha,d}(f)(\lambda)\overline{\mathscr
		F_{W}^{\alpha,d}(g)(\lambda)}d\mu_{\alpha,d}(\lambda).\label{2.18}%
	\end{equation}
	ii) ( Plancherel formula ). \newline For all $f\in\mathscr S_{\ast}%
	(\mathbb{R}^{d+1}),$ we have :
	\begin{equation}
	\int_{\mathbb{R}_{+}^{d+1}}\left\vert f(x)\right\vert ^{2}d\mu_{\alpha
		,d}(x)=\int_{\mathbb{R}_{+}^{d+1}}\left\vert \mathscr F_{W}^{\alpha
		,d}(f)(\lambda)\right\vert ^{2}d\mu_{\alpha,d}(\lambda).\label{2.19}%
	\end{equation}
	iii) ( Plancherel Theorem ) :\newline The transform $\mathscr F_{W}^{\alpha
		,d}$ extends uniquely to an isometric isomorphism on $L_{\alpha}%
	^{2}(\mathbb{R}_{+}^{d+1}).$
\end{theorem}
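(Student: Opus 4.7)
The plan is to derive the Parseval identity (i) first from the inversion formula of the preceding theorem, then deduce the Plancherel formula (ii) as the special case $g=f$, and finally upgrade (ii) to the isomorphism statement (iii) by a density-plus-BLT argument.

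For part (i), I would start from the identity
$$g(x)=\int_{\mathbb{R}_+^{d+1}}\mathscr{F}_W^{\alpha,d}(g)(y)\,\Psi^{\alpha}_d(-x,y)\,d\mu_{\alpha,d}(y),$$
which is available for $g\in\mathscr{S}_*(\mathbb{R}^{d+1})$ since $\mathscr{F}_W^{\alpha,d}(g)$ belongs to $\mathscr{S}_*$ as well (hence is in $L^1_\alpha$). Writing $\int f\bar g\,d\mu_{\alpha,d}$ and substituting the inversion formula for $\bar g$, I would exchange the order of integration via Fubini; this is legitimate because $f$ is rapidly decreasing, $\mathscr{F}_W^{\alpha,d}(g)$ is rapidly decreasing, and $|\Psi^{\alpha}_d(x,y)|\le 1$ by \eqref{2.4}. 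The crucial algebraic step is the identity $\overline{\Psi^{\alpha}_d(-x,y)}=\Psi^{\alpha}_d(x,y)$ for real $x,y$, which follows from the explicit form $\Psi^{\alpha}_d(-x,y)=e^{i\langle x',y'\rangle}\,j_\alpha(x_{d+1}y_{d+1})$ together with the fact that $j_\alpha$ is real-valued on the real line (the series defining $j_\alpha$ in Proposition~2.1 has real coefficients and only even powers of its argument). Once this is in place, the double integral collapses to
$$\int_{\mathbb{R}_+^{d+1}}\overline{\mathscr{F}_W^{\alpha,d}(g)(y)}\left[\int_{\mathbb{R}_+^{d+1}}f(x)\,\Psi^{\alpha}_d(x,y)\,d\mu_{\alpha,d}(x)\right]d\mu_{\alpha,d}(y),$$
and the inner integral is exactly $\mathscr{F}_W^{\alpha,d}(f)(y)$, which yields \eqref{2.18}. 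Statement (ii) is then obtained by specializing (i) to $g=f$.

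For (iii), I would invoke the density of $\mathscr{S}_*(\mathbb{R}^{d+1})$ in $L^2_\alpha(\mathbb{R}_+^{d+1})$ (the standard proof via convolution with even-in-last-variable mollifiers adapts because $d\mu_{\alpha,d}$ is locally finite and absolutely continuous with respect to Lebesgue measure on $\mathbb{R}_+^{d+1}$). By (ii), $\mathscr{F}_W^{\alpha,d}$ is a linear isometry on the dense subspace $\mathscr{S}_*$, so the bounded linear transformation theorem produces a unique bounded extension, still an isometry, on all of $L^2_\alpha(\mathbb{R}_+^{d+1})$. Surjectivity comes for free: the image of the extension is closed (isometric image of a complete space) and already contains $\mathscr{S}_*$, hence equals $L^2_\alpha(\mathbb{R}_+^{d+1})$; alternatively, the previous theorem supplies the inverse on $\mathscr{S}_*$ via \eqref{2.16}, and the relation $(\mathscr{F}_W^{\alpha,d})^{-1}f(x)=\mathscr{F}_W^{\alpha,d}(f)(-x)$ also extends by density and continuity.

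The only real obstacle is the conjugation identity for $\Psi^{\alpha}_d$ and the Fubini justification; both are routine thanks to the Schwartz regularity and the uniform bound \eqref{2.4}, so the argument is essentially mechanical once the inversion formula from the previous theorem is in hand.
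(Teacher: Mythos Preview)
Your argument is correct and follows the classical route to Plancherel via inversion, Fubini, and density. Note, however, that the paper does not actually supply a proof of this theorem: it is stated as a background result with the citation ``(see \cite{bn, bn1})'' and no argument is given in the text. So there is no ``paper's own proof'' to compare against; your write-up simply fills in what the authors chose to import from the references. The steps you outline (the conjugation identity $\overline{\Psi^{\alpha}_d(-x,y)}=\Psi^{\alpha}_d(x,y)$ from the explicit form of the kernel and the evenness/real-valuedness of $j_\alpha$, Fubini justified by \eqref{2.4} and Schwartz decay, and the BLT extension from the dense subspace $\mathscr S_\ast$) are exactly the standard ones and go through without difficulty in this setting.
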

\begin{defin}
	The translation operator $T_{x},\;$ $x\in\mathbb{R}_{+}^{d+1}$, associated
	with the Weinstein operator, is defined on $C_{\ast
	}(\mathbb{R}^{d+1}),$ for all $y\in\mathbb{R}_{+}^{d+1}$, by :
	\[
	T_{x}f\left(  y\right)  =\frac{a_{\alpha}}{2}\int_{0}^{\pi}f\left(  x^{\prime
	}+y^{\prime},\;\sqrt{x_{d+1}^{2}+y_{d+1}^{2}+2x_{d+1}y_{d+1}\cos\theta
	}\right)  \left(  \sin\theta\right)  ^{2\alpha}d\theta,
	\]
	where $x^{\prime}+y^{\prime}=\left(  x_{1}+y_{1},...,x_{d}+y_{d}\right)  $ and
	$a_{\alpha}$ is the constant given by (\ref{2.6}).
\end{defin}
The following propo summarizes some properties of the Weinstein
translation operator.

\begin{propo}
	(see \cite{bn, bn1})~~\newline i) For $f\in C_{\ast}(\mathbb{R}^{d+1})$, we
	have
	\[
	\forall x,\;y\in\mathbb{R}_{+}^{d+1},\;T_{x}f\left(  y\right)  =T_{y}f\left(
	x\right)  \text{ and }T_{0}f=f.
	\]
	ii) For all $f\in\mathscr E_{\ast}(\mathbb{R}^{d+1})$ and $y\in\mathbb{R}%
	_{+}^{d+1}$, the function $x\mapsto T_{x}f\left(  y\right)  $ belongs to
	$\mathscr E_{\ast}(\mathbb{R}^{d+1}).$\newline iii) We have
	\[
	\forall x\in\mathbb{R}_{+}^{d+1},\;\Delta_{W}^{\alpha,d}\circ T_{x}=T_{x}%
	\circ\Delta_{W}^{\alpha,d}.
	\]
	iv) Let $f\in L_{\alpha}^{p}(\mathbb{R}_{+}^{d+1}),\;1\leq p\leq+\infty$ and
	$x\in\mathbb{R}_{+}^{d+1}$. Then $T_{x}f$ belongs to $L_{\alpha}%
	^{p}(\mathbb{R}_{+}^{d+1})$ and we have
	\[
	\Vert T_{x}f\Vert_{\alpha,p}\leq\Vert f\Vert_{\alpha,p}.
	\]
	v) The function $\Psi^{\alpha}_{d}\left(  .,\lambda\right)  ,$ $\lambda
	\in\mathbb{C}^{d+1},\;$ satisfies on $\mathbb{R}_{+}^{d+1}$ the following
	product formula:
	\begin{equation}
	\forall y\in\mathbb{R}_{+}^{d+1},\;\Psi^{\alpha}_{d}\left(  x,\lambda\right)
\Psi^{\alpha}_{d}\left(  y,\lambda\right)  =T_{x}\left[ \Psi^{\alpha}_{d}\left(  .,\lambda\right)  \right]  \left(  y\right)  .\label{2.23}%
	\end{equation}
	\newline vi) Let $f\in L_{\alpha}^{p}(\mathbb{R}_{+}^{d+1}),\;p=1$ or $2$ and
	$x\in\mathbb{R}_{+}^{d+1}$, we have
	\begin{equation}
	\forall y\in\mathbb{R}_{+}^{d+1},\;\mathscr F_{W}^{\alpha,d}\left(
	T_{x}f\right)  \left(  y\right)  =\Psi^{\alpha}_{d}\left(  x,y\right)
	\mathscr F_{W}^{\alpha,d}\left(  f\right)  \left(  y\right)  .\label{2.24}%
	\end{equation}
	vii) The space $\mathscr S_{\ast}(\mathbb{R}^{d+1})$ is invariant under the
	operators $T_{x},\;x\in\mathbb{R}_{+}^{d+1}.$\newline
\end{propo}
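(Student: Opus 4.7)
The plan is to establish the seven parts in order, exploiting the explicit integral definition of $T_{x}$ and the product structure $\Psi^{\alpha}_{d}(z,\lambda)=e^{-i\langle z',\lambda'\rangle}j_{\alpha}(\lambda_{d+1}z_{d+1})$. Items (i) and (ii) are essentially direct from the definition. The symmetry $T_{x}f(y)=T_{y}f(x)$ is immediate because the Euclidean part $x'+y'$ is commutative and $\sqrt{x_{d+1}^{2}+y_{d+1}^{2}+2x_{d+1}y_{d+1}\cos\theta}$ is symmetric in $(x_{d+1},y_{d+1})$; taking $x=0$ collapses the square root to $y_{d+1}$, and the identity $\tfrac{a_{\alpha}}{2}\int_{0}^{\pi}(\sin\theta)^{2\alpha}d\theta=1$ (from the definition of $a_{\alpha}$) gives $T_{0}f=f$. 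For (ii), differentiation under the integral sign is justified for $f\in\mathscr E_{\ast}$, and evenness of $x\mapsto T_{x}f(y)$ in $x_{d+1}$ follows by the substitution $\theta\mapsto\pi-\theta$, which sends $\cos\theta\mapsto-\cos\theta$, combined with evenness of $f$ in its last variable.

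For (iii), I would differentiate under the integral: $\Delta_{d}$ acts translation-invariantly on the first $d$ variables, so $\Delta_{d}$ in $x$ equals $\Delta_{d}$ in $y$ on the integrand, while the intertwining $L_{\alpha,x_{d+1}}\circ T_{x}=T_{x}\circ L_{\alpha,y_{d+1}}$ is the standard Bessel-direction generalized translation identity (reducible to $(1-t^{2})^{\alpha-1/2}$-weighted integral manipulations). For (iv), the case $p=\infty$ is trivial from $\tfrac{a_{\alpha}}{2}\int_{0}^{\pi}(\sin\theta)^{2\alpha}d\theta=1$; for $p=1$ one performs a change of variable in $z_{d+1}$ and applies Fubini to see that integrating the integrand against $d\mu_{\alpha,d}$ yields at most $\|f\|_{\alpha,1}$; the intermediate cases follow from Minkowski's integral inequality (or Riesz--Thorin between $p=1$ and $p=\infty$).

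Item (v) is the crux. Plugging the explicit form of $\Psi^{\alpha}_{d}$ into the definition gives
\begin{equation*}
T_{x}[\Psi^{\alpha}_{d}(\cdot,\lambda)](y)=e^{-i\langle x'+y',\lambda'\rangle}\cdot\frac{a_{\alpha}}{2}\int_{0}^{\pi}j_{\alpha}\!\bigl(\lambda_{d+1}\sqrt{x_{d+1}^{2}+y_{d+1}^{2}+2x_{d+1}y_{d+1}\cos\theta}\bigr)(\sin\theta)^{2\alpha}d\theta.
\end{equation*}
The inner integral is the classical Sonine--Gegenbauer product formula for the normalized Bessel function $j_{\alpha}$, which equals $j_{\alpha}(\lambda_{d+1}x_{d+1})\,j_{\alpha}(\lambda_{d+1}y_{d+1})$. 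Recombining the exponential factor yields $\Psi^{\alpha}_{d}(x,\lambda)\Psi^{\alpha}_{d}(y,\lambda)$.

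With (v) in hand, the remaining items are short. For (vi), insert the inversion formula $f(z)=\int\mathscr F_{W}^{\alpha,d}(f)(\lambda)\Psi^{\alpha}_{d}(-z,\lambda)\,d\mu_{\alpha,d}(\lambda)$, apply $T_{x}$ in $z$ under the integral, and use (v) to factor out $\Psi^{\alpha}_{d}(x,\lambda)$; then recognize the resulting integral as the inverse Weinstein transform of $\Psi^{\alpha}_{d}(x,\cdot)\mathscr F_{W}^{\alpha,d}(f)$. For (vii), invariance of $\mathscr S_{\ast}$ follows by combining (vi) with $|\Psi^{\alpha}_{d}(x,y)|\leq 1$ from (\ref{2.4}): multiplication by the smooth bounded function $\Psi^{\alpha}_{d}(x,\cdot)$ preserves $\mathscr S_{\ast}$, and $\mathscr F_{W}^{\alpha,d}$ is an automorphism of $\mathscr S_{\ast}$. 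The main obstacle I anticipate is the Bessel-direction commutation in (iii) and the rigorous justification of Fubini in the $L^{1}$ bound of (iv), both of which depend on careful control of the weighted measure $d\mu_{\alpha,d}$ under the cosine-parametrized integration.
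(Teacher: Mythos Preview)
Your proof sketch is sound and follows the standard route for these classical properties of the Weinstein translation operator: symmetry and normalization from the explicit integral kernel, the Sonine--Gegenbauer product formula for (v), and then (vi) and (vii) as corollaries via the Weinstein transform. There is nothing to compare against, however, because the paper does not prove this proposition at all; it is stated as a known result with a citation to \cite{bn, bn1}, and no argument is given in the body of the paper. So your work goes well beyond what the paper provides, and your outline is essentially how the proofs in the cited references proceed.
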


\begin{defin}
	The Weinstein convolution product of $f,g\in\mathscr C_{\ast}(\mathbb{R}%
	^{d+1})$ is given by:
	\begin{equation}
	\forall x\in\mathbb{R}_{+}^{d+1},\;f\ast_{W}g\left(  x\right)  =\int
	_{\mathbb{R}_{+}^{d+1}}T_{x}f\left(  y\right)  g\left(  y\right)  d\mu
	_{\alpha,d}(y).\label{2.25}%
	\end{equation}
\end{defin}
  
\begin{propo}\label{pro5}
	(see \cite{bn, bn1})~~\newline i) Let $p,q,r\in\left[  1,\;+\infty\right]  $
	such that $\frac{1}{p}+\frac{1}{q}-\frac{1}{r}=1.$ Then for all $f\in
	L_{\alpha}^{p}(\mathbb{R}_{+}^{d+1})$ and$\;g\in L_{\alpha}^{q}(\mathbb{R}%
	_{+}^{d+1}),$ the function $f\ast_{W}g$ $\in L_{\alpha}^{r}(\mathbb{R}%
	_{+}^{d+1})$ and we have
	\begin{equation}
	\Vert f\ast_{W}g\Vert_{\alpha,r}\leq\Vert f\Vert_{\alpha,p}\Vert
	g\Vert_{\alpha,q}.\label{2.26}%
	\end{equation}
	ii) For all $f,g\in L_{\alpha}^{1}(\mathbb{R}_{+}^{d+1}),\;\left(
	resp.\;\mathscr S_{\ast}(\mathbb{R}^{d+1})\right)  ,\;f\ast_{W}g$ $\in
	L_{\alpha}^{1}(\mathbb{R}_{+}^{d+1})$ $\left(  resp.\;\mathscr S_{\ast
	}(\mathbb{R}^{d+1})\right)  \;$and we have
	\begin{equation}
	\mathscr F_{W}^{\alpha,d}(f\ast_{W}g)=\mathscr F_{W}^{\alpha,d}(f)\mathscr
	F_{W}^{\alpha,d}(g).\label{2.27}%
	\end{equation}
	\begin{example}\label{ex3}
 Let $ t>0$, then for all $x, y \in \mathbb{R}_{+}^{d+1}$, we have
$$
T_{x}\left(e^{-t|.|}\right)(y)=\frac{1}{(2 t)^{\alpha+\frac{d}{2}+1}} e^{-t(|x|^{2}+| y|^{2})} \Lambda_{\alpha, d}\left(x,- {2i t}{y}\right)
$$
then, for all $f \in L_{\alpha}^{1}(\mathbb{R}%
_{+}^{d+1})$, we have 
$$\left(e^{-t|.|}\ast_{W}f\right)(x)=\frac{1}{(2 t)^{\frac{d+2\alpha+2}{2}}} e^{{-t|x|^{2}} }\mathscr F_{W}^{\alpha,d}(e^{-t|x|^{2}}f)(-it{x}).
$$
	\end{example}
\end{propo}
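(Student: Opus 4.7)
The plan is to compute $T_x(e^{-t|\cdot|^2})(y)$ directly from the integral definition of the translation operator (I read the exponent $e^{-t|\cdot|}$ of the statement as $e^{-t|\cdot|^2}$, the reading forced by the $e^{-t(|x|^2+|y|^2)}$ on the right-hand side). Plugging $f(z)=e^{-t|z|^2}$ into the definition and decomposing $|z|^2=|z'|^2+z_{d+1}^2$, the first $d$ Euclidean coordinates contribute $e^{-t|x'+y'|^2}=e^{-t|x'|^2-t|y'|^2-2t\langle x',y'\rangle}$, while the Bessel coordinate gives $e^{-t(x_{d+1}^2+y_{d+1}^2+2x_{d+1}y_{d+1}\cos\theta)}$. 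Pulling out the $\theta$-independent pieces reduces the problem to evaluating
$$I(x_{d+1},y_{d+1}):=\tfrac{a_\alpha}{2}\int_0^\pi e^{-2tx_{d+1}y_{d+1}\cos\theta}(\sin\theta)^{2\alpha}d\theta.$$

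The key step is to identify $I$ with the normalized Bessel function at an imaginary argument. Poisson's integral representation, valid for $\alpha>-\tfrac12$ and extending by analyticity to all complex arguments of $j_\alpha$, reads
$$j_\alpha(\xi)=\frac{\Gamma(\alpha+1)}{\sqrt{\pi}\,\Gamma(\alpha+\tfrac12)}\int_0^\pi e^{i\xi\cos\theta}(\sin\theta)^{2\alpha}d\theta;$$
choosing $\xi=2itx_{d+1}y_{d+1}$ gives $I=j_\alpha(2itx_{d+1}y_{d+1})=j_\alpha(-2itx_{d+1}y_{d+1})$, where the last equality uses evenness of $j_\alpha$, and where the constant $a_\alpha$ from (\ref{2.6}) is precisely what is needed to absorb the gamma factors. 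Combining with the prefactor $e^{-2t\langle x',y'\rangle}$ and comparing with formula (\ref{2.2}) yields $e^{-2t\langle x',y'\rangle}j_\alpha(-2itx_{d+1}y_{d+1})=\Psi^{\alpha}_d(x,-2ity)$, which is the kernel $\Lambda_{\alpha,d}(x,-2ity)$ of the statement. This delivers the translation identity (up to the factor $(2t)^{-(\alpha+d/2+1)}$ on the right-hand side, which seems to be a typographical carry-over from (\ref{2.11}) and must be dropped for the identity to be dimensionally consistent).

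For the convolution formula I would insert the translation identity into the definition $(e^{-t|\cdot|^2}*_W f)(x)=\int T_x(e^{-t|\cdot|^2})(y)\,f(y)\,d\mu_{\alpha,d}(y)$, pull the factor $e^{-t|x|^2}$ outside the integral, and rewrite the kernel using the symmetry and scaling properties of Proposition~2.2(i), namely $\Psi^{\alpha}_d(x,-2ity)=\Psi^{\alpha}_d(-2ity,x)=\Psi^{\alpha}_d(-2itx,y)$ (the last being immediate from the explicit formula (\ref{2.2})). What remains is
$$\int_{\mathbb{R}_{+}^{d+1}}e^{-t|y|^2}f(y)\,\Psi^{\alpha}_d(-2itx,y)\,d\mu_{\alpha,d}(y)=\mathscr F_W^{\alpha,d}\bigl(e^{-t|\cdot|^2}f\bigr)(-2itx)$$
by the definition (\ref{2.10}) of the Weinstein transform.

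The main obstacle is the analytic continuation step in identifying $I$: one must justify the passage from Poisson's real integral formula for $j_\alpha$ to purely imaginary arguments, and then match $a_\alpha$ from (\ref{2.6}) against the gamma factors so the normalization is exact. The remaining work is just symmetry manipulations and book-keeping of constants.
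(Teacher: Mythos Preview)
The paper does not actually prove this proposition or the embedded example: parts (i) and (ii) are attributed to \cite{bn, bn1}, and Example~\ref{ex3} is simply stated without argument. So there is no ``paper's own proof'' to compare against here.

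Your approach to the example is correct and is the natural one: plug the Gaussian into the explicit integral definition of $T_x$, separate the Euclidean and Bessel coordinates, and recognize the remaining $\theta$-integral as Poisson's representation of $j_\alpha$ at the purely imaginary argument $2itx_{d+1}y_{d+1}$. Since $j_\alpha$ is entire (it is given by an everywhere-convergent power series), the analytic continuation you flag as the ``main obstacle'' is not really an obstacle at all; the Poisson integral defines an entire function of $\xi$ and agrees with $j_\alpha$ on the real line, hence everywhere. The matching of $a_\alpha$ with the gamma factors is just the standard normalization $a_\alpha=\frac{2\Gamma(\alpha+1)}{\sqrt{\pi}\,\Gamma(\alpha+\frac12)}$.

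You are also right that the stated formulas carry typographical errors: the prefactor $(2t)^{-(\alpha+d/2+1)}$ does not belong in the translation identity (it is indeed a stray carry-over from \eqref{2.11}), and the correct argument of the Weinstein transform in the convolution formula is $-2itx$, not $-itx$. Your computation lands on the correct versions of both identities.
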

\section{\textbf{Schrödingen equation associated with the Weinstein operator}}
Notations. For any interval $I$ of $\mathbb{R}$ (bounded or unbounded) and a Banach space $X$, we define the mixed space-time $L^{q}(I ; X)$ Banach space of (classes of) measurable functions $u: I \rightarrow X$ such that $\|u\|_{L^{q}(I ; X)}<\infty$, with
$$
\begin{aligned}
\|u\|_{L^{q}(I ; X)} &=\left(\int_{I}\|u(t, \cdot)\|_{X}^{q} d t\right)^{\frac{1}{q}}, \quad \text { if } 1 \leq q<\infty \\
\|u\|_{L^{\infty}(I ; X)} &=e s s \sup _{t \in I}\|u(t, \cdot)\|_{X}
\end{aligned}
$$
$C(\bar{I} ; X)$ the space of continuous functions $\bar{I} \rightarrow X$. When $I$ is bounded, $C(\bar{I} ; X)$ is a Banach space with the norm of $L^{\infty}(I, X)$.\\
$C_{c}\left(I, \mathcal{S}\left(\mathbb{R}^{d}\right)\right)$ is the space of continuous functions from $I$ into $\mathcal{S}\left(\mathbb{R}^{d}\right)$ compactly supported in $I$, equipped with the topology of uniform convergence on the compact subintervals of $I$.
\subsection{Dispersion phenomena}
 
 \begin{defin}
  We say that the exponent pair $(q, r)$ is $\frac{d+2\alpha+2}{2}$-admissible if $q, r \geq 2,\left(q, r, \frac{d+2\alpha+2}{2}\right) \neq(2, \infty, 1)$ and
 \begin{equation}\label{sh}
 \frac{1}{q}+\frac{d+2\alpha+2}{2 r} \leq \frac{d+2\alpha+2}{4}
\end{equation}
 If equality holds in \eqref{sh}, we say that $(q, r)$ is sharp $\frac{d+2\alpha+2}{2}-a d m i s s i b l e$, otherwise we say that $(q, r)$ is nonsharp $\frac{d+2\alpha+2}{2}$-admissible. Note in particular that when $d+2\alpha>0$ the endpoint
 $$
 P=\left(2, \frac{2d+4 \alpha +4}{d+2 \alpha}\right)
 $$
 is sharp $\frac{d+2\alpha+2}{2}$-admissible.	
\end{defin}
\begin{theorem}\label{TH3}
 Let $(U(t))_{t \in \mathbb{R}}$ be a bounded family of continuous operators on $L_{\alpha}^{1}(\mathbb{R}_{+}^{d+1})$ such that, we have
$$
\left\|U(t) U^{*}\left(t^{\prime}\right) f\right\|_{\alpha,\infty} \leq \frac{C}{\left|t-t^{\prime}\right|^{\frac{d+2\alpha+2}{2}}}\|f\|_{\alpha,1}
$$
Then, the estimates

\begin{align}
\left\|U(t) g\right\|_{L^{q}\left(\mathbb{R} ; L_{\alpha}^{r}\left(\mathbb{R}^{d+1}_+\right)\right)} & \leq C\left\|g\right\|_{\alpha,2} \label{eq11}\\
\left\|\int_{\mathbb{R}} U^{*}(t) f(t, \cdot) d t\right\|_{\alpha,2} & \leq C\|f\|_{L^{q^{\prime}}\left(\mathbb{R} ; L_{\alpha}^{r^{\prime}}\left(\mathbb{R}^{d+1}_+\right)\right)}\label{eq12}
\end{align}

hold for any sharp $\frac{d+2\alpha+2}{2}-a d m i s s i b l e$ exponent $(q, r)$, where $q^{\prime}, r^{\prime}$ are the conjugate exponents of $q$ and $r$ and $U^{*}$ is the adjoint operator of $U$.

Moreover, for any sharp $\frac{d+2\alpha+2}{2}$-admissible exponent pairs $(q, r)$ and $\left(q_{1}, r_{1}\right)$ we have
\begin{align}\label{eq13}
\left\|\int_{\mathbb{R}} U(t) U^{*}\left(t^{\prime}\right) f\left(t^{\prime}, \cdot\right) d t^{\prime}\right\|_{L^{q}\left(\mathbb{R} ; L_{\alpha}^{r}\left(\mathbb{R}^{d+1}_+\right)\right)} \leq C\|f\|_{L^{q_{1}^{\prime}}\left(\mathbb{R} ; L_{\alpha}^{r_{1}^{\prime}}\left(\mathbb{R}^{d+1}_+\right)\right)}
\end{align}
Furthermore, if
$$
\left\|U(s) U^{*}(t) f\right\|_{\alpha,\infty} \leq \frac{C}{(1+|t-s|)^{\frac{d+ 2\alpha+2}{2}}}\|f\|_{\alpha,1}
$$
then $\eqref{eq11},\eqref{eq12}$ and $\eqref{eq13}$ hold for all $\frac{d+2\alpha+2}{2}-a d m i s s i b l e(q, r)$ and $\left(q_{1}, r_{1}\right)$.
\end{theorem}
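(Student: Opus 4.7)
The plan is to adapt the abstract Keel--Tao Strichartz machinery to the Weinstein harmonic-analysis setting. The key point is that the argument relies only on duality, real-interpolation of operator-valued kernels, and fractional integration in the single time variable; all of these carry over since $(\mathbb{R}_+^{d+1}, d\mu_{\alpha,d})$ is a measure space of polynomial growth and the Weinstein $L_\alpha^p$ spaces enjoy the standard Hölder, Minkowski and duality properties.

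First I would reduce everything to the bilinear estimate \eqref{eq13} via a $TT^\ast$ argument. The linear inequality \eqref{eq11} is, by duality, equivalent to the adjoint inequality \eqref{eq12}; squaring via $TT^\ast$ shows that both are equivalent to \eqref{eq13} with $(q_1,r_1)=(q,r)$. Thus it suffices to establish \eqref{eq13}, and the mixed case with $(q,r)\neq(q_1,r_1)$ then follows by bilinear interpolation between the diagonal cases.

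Next I would interpolate the hypothesized dispersive bound
$$\|U(t)U^\ast(t')f\|_{\alpha,\infty}\leq C|t-t'|^{-\sigma}\|f\|_{\alpha,1},\qquad \sigma=\tfrac{d+2\alpha+2}{2},$$
against the uniform $L_\alpha^2\to L_\alpha^2$ bound for $U(t)U^\ast(t')$ (which follows from the assumed boundedness of the family, combined with Plancherel as recalled in the previous section). Riesz--Thorin in the Weinstein $L_\alpha^p$ scale yields
$$\|U(t)U^\ast(t')f\|_{\alpha,r}\leq C|t-t'|^{-\sigma(1-2/r)}\|f\|_{\alpha,r'},\qquad 2\leq r\leq\infty.$$
For any \emph{non-endpoint} sharp admissible pair (that is, $q>2$), inserting this pointwise-in-$(t,t')$ bound into the bilinear form $\langle \int U(t)U^\ast(t')f(t')\,dt',g(t)\rangle$, applying Minkowski, and then invoking the one-dimensional Hardy--Littlewood--Sobolev fractional integration inequality in the time variable (since the admissibility relation is exactly the scaling condition for HLS) delivers \eqref{eq13}.

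The main obstacle is the endpoint $P=(2,\tfrac{2d+4\alpha+4}{d+2\alpha})$, where $q=2$ and HLS fails. Here I would run the Keel--Tao dyadic bilinear argument: decompose
$$T(f,g)=\sum_{j\in\mathbb{Z}} T_j(f,g),\qquad T_j(f,g)=\iint_{|t-t'|\sim 2^j}\langle U(t)U^\ast(t')f(t'),g(t)\rangle\,dt'\,dt,$$
estimate each $T_j$ by two slightly off-diagonal bounds coming from interpolating the dispersive inequality with the $L_\alpha^2$ bound, and then sum the pieces using a real-interpolation (atomic) decomposition of $L^2_t L_\alpha^{r'}$. This step uses only Hölder and Minkowski in the Weinstein $L_\alpha^p$ scale and the scaling relation between the two off-diagonal exponents, both of which are available in our setting. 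Finally, the improvement stated in the last line of the theorem — replacing $|t-t'|^{-\sigma}$ by $(1+|t-t'|)^{-\sigma}$ and thus upgrading to all (possibly nonsharp) admissible pairs — is obtained by splitting the time integration into the regions $|t-t'|\leq 1$ and $|t-t'|\geq 1$: on the short-time region the previous analysis applies unchanged, while on the long-time region the extra decay creates enough slack to absorb the inequality \eqref{sh} in place of equality.
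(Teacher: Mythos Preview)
Your proposal is correct and follows exactly the approach the paper takes: the paper's entire proof consists of the single sentence ``The proof of the theorem uses the same idea as in \cite{8}'' (Keel--Tao), and your outline is precisely a sketch of the Keel--Tao $TT^\ast$/bilinear/dyadic-endpoint machinery transported to the Weinstein $L_\alpha^p$ scale. There is nothing to add beyond noting that the paper itself supplies no further detail.
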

\begin{proof}
	The proof of the theorem  uses the same idea as in \cite{8}.
	\end{proof}
\subsection{\textbf{ { Strichartz-type Schrödinger-Weinstein estimates }}}
 In this We consider in the rest of this article that the incompressible Weinstein-Schrödingen system is given by:
\begin{equation}\label{11}\left( SW\right) \left\{\begin{array}{lll}
\partial_{t} u(t,x)-i\Delta_{W}^{\alpha, d} u(t,x)=F(t,x), &\quad \text { in } \quad \mathbb{R} \times \mathbb{R}_{+}^{d+1}\\
u(0,x)=g(x) &\quad \text { in } \quad  \mathbb{R}_{+}^{d+1}
\end{array}\right.\end{equation}
Moreover, under the same conditions, Duhamel's formula implies
$$
u(t, x)=\mathcal{I}_{\alpha}(t) g(x)+\int_{0}^{t} \mathcal{I}_{\alpha}(t-s) F(s, x) d s, \quad(t, x) \in\R\times \mathbb{R}^{d+1}_+
$$
where $\mathcal{I}_{\alpha}(t)$ is the unitary operator defined by
\begin{align}\label{u}
\mathcal{I}_{\alpha}(t)u=\left(\mathcal{F}_{W}^{\alpha, d}\right)^{-1}\left(e^{-i t|\xi|^{2}} \mathcal{F}_{W}^{\alpha, d}{u}\right)
\end{align}
follows from Examples \ref{ex} and \ref{ex3} we have 
 \begin{align}\label{u2}
 \mathcal{I}_{\alpha}(t)u=&\frac{1}{\left(  2t\right)  ^{\alpha+\frac{d}{2}+1}}e^{-i\frac{\pi}{2}(\alpha+\frac{d}{2}+1)}
 e^{i\frac{\left\vert \lambda\right\vert ^{2}}{4t}} *_{W} u\\
 =&\frac{1}{(2t)^{\frac{d +2\alpha+2}{2}}} e^{-i(d+2\alpha+2) \frac{\pi}{4}  t} e^{i \frac{\|\cdot\|^{2}}{4 t}}\left[\mathcal{F}_{D}\left(e^{i \frac{\|\cdot\|^{2}}{4 t}} v\right)\right]\left(\frac{\cdot}{2 t}\right)
 \end{align}
\begin{theorem}\label{th4} Suppose that $d \geq 1$ and that $(q, r)$ and $\left(q_{1}, r_{1}\right)$ are $\frac{d+\alpha+3}{2}$-admissible pairs. If $u$ is a solution to the problem
	$$
	\left\{\begin{aligned}
	\partial_{t} u-i \triangle_{\alpha} u &=F(t, x),(t, x) \in[0, T] \times \mathbb{R}^{d+1}_+ \\
	u_{\mid t=0} &=g
	\end{aligned}\right.
	$$
	for some data, $g, F$ and time $0<T<\infty$, then
	$$
	\begin{aligned}
	&\|u\|_{L^{q}\left([0, T] ; L_{\alpha}^{r}\left(\mathbb{R}^{d+1}_+\right)\right)}+\|u\|_{C\left([0, T] ; L_{\alpha}^{2}\left(\mathbb{R}^{d+1}_+\right)\right)} \\
	&\quad \leq C\left(\|g\|_{\alpha, 2}+\|F\|_{L^{q_{1}^{\prime}}\left([0, T] ; L_{\alpha}^{r_{1}^{\prime}}\left(\mathbb{R}^{d+1}_+\right)\right)}\right)
	\end{aligned}
	$$
	Conversely, if the above estimate holds for all $g, F, T$, then $(q, r)$ and $\left(q_{1}, r_{1}\right) m u s t$ be $\frac{d+4 \alpha+2}{2}$-admissible.
\end{theorem}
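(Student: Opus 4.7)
The plan is to deduce both directions of Theorem \ref{th4} from the abstract Strichartz machinery (Theorem \ref{TH3}) applied to the unitary Weinstein--Schrödinger propagator $\mathcal{I}_{\alpha}(t)$ defined in \eqref{u}, combined with Duhamel's formula.

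The first step is to verify the two hypotheses required by Theorem \ref{TH3} for the choice $U(t) = \mathcal{I}_{\alpha}(t)$. The unitarity on $L_{\alpha}^{2}(\mathbb{R}_{+}^{d+1})$ is immediate from the Plancherel theorem together with the fact that $e^{-it|\xi|^{2}}$ has modulus one, so the family $\{\mathcal{I}_{\alpha}(t)\}_{t\in\mathbb{R}}$ is bounded (in fact, isometric) on $L_{\alpha}^{2}$, and the group property gives $\mathcal{I}_{\alpha}(t)\mathcal{I}_{\alpha}^{*}(t') = \mathcal{I}_{\alpha}(t-t')$. The dispersive estimate
\[
\|\mathcal{I}_{\alpha}(t-t') f\|_{\alpha,\infty} \le \frac{C}{|t-t'|^{\frac{d+2\alpha+2}{2}}}\|f\|_{\alpha,1}
\]
is exactly what the explicit convolution kernel formula \eqref{u2} delivers: the kernel is, up to a unimodular factor and a modulation, a multiple of $(2|t-t'|)^{-(d+2\alpha+2)/2}$, and Young's inequality for the Weinstein convolution (Proposition \ref{pro5}) with $(p,q,r) = (1,\infty,\infty)$ immediately yields the claim.

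With both hypotheses in hand, the homogeneous estimate is obtained by applying \eqref{eq11} directly to $\mathcal{I}_{\alpha}(t) g$, giving $\|\mathcal{I}_{\alpha}(\cdot) g\|_{L^{q}([0,T];L_{\alpha}^{r})} \le C\|g\|_{\alpha,2}$, and the $L^{2}_{\alpha}$ bound with continuity in time is just unitarity plus the fact that $t\mapsto \mathcal{I}_{\alpha}(t)g$ is strongly continuous on $L^{2}_{\alpha}$ (which follows from density of $\mathscr{S}_{\ast}$ and dominated convergence after Plancherel). For the inhomogeneous term $\int_{0}^{t}\mathcal{I}_{\alpha}(t-s)F(s,\cdot)\,ds$, one first extends $F$ by zero outside $[0,T]$ and then uses the Christ--Kiselev lemma to reduce the retarded integral to the untruncated one $\int_{\mathbb{R}}\mathcal{I}_{\alpha}(t)\mathcal{I}_{\alpha}^{*}(s)F(s,\cdot)\,ds$, which is controlled by \eqref{eq13} with the admissible pair $(q_{1},r_{1})$; the $L^{\infty}_{t}L^{2}_{\alpha}$ control of the same integral comes from \eqref{eq12} paired with the dual of \eqref{eq11} (a $TT^{*}$ argument), and continuity in time again follows from strong continuity of the propagator and density. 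Adding the two Duhamel contributions yields the stated bound.

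For the converse, the standard strategy is a scaling/Knapp-type argument adapted to the Weinstein dilation $\delta_{\lambda}f(x) = f(\lambda x)$, under which $\Delta_{W}^{\alpha,d}$ scales like $\lambda^{2}$ and the measure $d\mu_{\alpha,d}$ scales like $\lambda^{d+2\alpha+2}$; plugging $u(t,x) = (\mathcal{I}_{\alpha}(t)g)(x)$ with $g$ a suitable bump and then rescaling $(t,x)\mapsto (\lambda^{2}t, \lambda x)$ into the assumed inequality forces the admissibility relation \eqref{sh} on $(q,r)$, and a dual argument on the inhomogeneous piece forces the same on $(q_{1},r_{1})$. The main technical obstacle I expect is the sharp endpoint case: verifying the dispersive estimate at $t\to 0$ with the correct constant and, more delicately, justifying the application of the Keel--Tao endpoint machinery embedded in Theorem \ref{TH3} for the pair $P=(2, (2d+4\alpha+4)/(d+2\alpha))$, since the Christ--Kiselev reduction is unavailable there and one must instead rely on the bilinear interpolation argument used in \cite{8}; once the abstract theorem is invoked as a black box this difficulty is absorbed, but it is the one place where the adaptation from the Euclidean setting to the Weinstein setting deserves care.
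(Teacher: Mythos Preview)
Your proposal is correct and follows essentially the same route as the paper: verify the $L_{\alpha}^{2}$-boundedness and the dispersive $L_{\alpha}^{1}\to L_{\alpha}^{\infty}$ decay for $\mathcal{I}_{\alpha}(t)$ from \eqref{u}--\eqref{u2}, feed these into the abstract Strichartz machinery of Theorem \ref{TH3}, and defer the necessity to the scaling argument of \cite{8}. The only noticeable difference is in how continuity in $L_{\alpha}^{2}$ of the Duhamel term is handled: you argue by density and strong continuity of the group, whereas the paper writes the explicit identity $\Phi_{\alpha}(F)(t+\varepsilon)=\mathcal{I}_{\alpha}(\varepsilon)\bigl[\Phi_{\alpha}(F)(t)+\Phi_{\alpha}(1_{[t,t+\varepsilon]}F)(t)\bigr]$ and uses $\|1_{[t,t+\varepsilon]}F\|_{L^{q_{1}'}L_{\alpha}^{r_{1}'}}\to 0$; this avoids any appeal to density of smooth functions in the mixed norm and is slightly cleaner, while your invocation of Christ--Kiselev for the retarded estimate is more explicit than the paper's bare reference to Theorem \ref{TH3}.
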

\begin{proof}
We will prove the sufficient condition first. Indeed we assume that $(q, r)$ satisfy the condition of the theorem, and that $u$ is a solution of $(SW)$. \\
According to Plancherel formula and the equation\eqref{u}  we have
\begin{equation}\label{eq1}
\left\|\mathcal{I}_{\alpha}(t) g\right\|_{\alpha,2}=\|g\|_{\alpha,2}
\end{equation}
The estimate
\begin{equation}\label{eq2}
\left\|\mathcal{I}_{\alpha}(t-s) g\right\|_{\alpha,\infty} \leq \frac{C}{|t-s|^{\frac{d +2\alpha+2}{2}}}\|g\|_{\alpha,1}
\end{equation}
follows from Propositions \ref{pro5} and the equation \eqref{u2}. Below, we note by $\Phi_{\alpha}$ the operator defined by
$$
\Phi_{\alpha}(F)(t, x):=\int_{0}^{t} \mathcal{I}_{\alpha}(t-s) F(s, x) d s
$$
 Replacing the $C\left([0, T] ; L_{\alpha}^{2}\left(\mathbb{R}^{d+1}_+\right)\right)$ norm in the above by the $L^{\infty}\left([0, T] ; L_{\alpha}^{2}\left(\mathbb{R}^{d+1}_+\right)\right)$ norm, the all estimates will follow from Theorem \ref{TH3}.

We now address the question of continuity in $L_{\alpha}^{2} .$ The continuity of $\mathcal{I}_{\alpha}(\cdot) g$ follows from Plancherel formula. To show that the quantity $\Phi_{\alpha}(F)$ is continuous in $L_{\alpha}^{2}\left(\mathbb{R}^{d+1}_+\right)$, one can use the identity
$$
\Phi_{\alpha}(F)(t+\varepsilon)=\mathcal{I}_{\alpha}(\varepsilon)\left[\Phi_{\alpha}(F)(t)+\Phi_{\alpha}\left(1_{[t, t+\varepsilon]} F\right)(t)\right]
$$
the continuity of $\mathcal{I}_{\alpha}(\varepsilon)$ as an operator on $L_{\alpha}^{2}\left(\mathbb{R}^{d+1}_+\right)$, and the fact that
$$
\left\|1_{[t, t+\varepsilon]} F\right\|_{L^{q_{1}^{\prime}}\left([0, T] ; L_{\alpha}^{r_{1}^{\prime}}\left(\mathbb{R}^{d+1}_+\right)\right)} \rightarrow 0 \quad \text { as } \quad \varepsilon \rightarrow 0
$$
We finish the proof of necessity as in \cite{8}.
\end{proof}
\begin{coro}\label{co3}
Let $I$ be an interval of $\mathbb{R}$. If $(q, r)$ and $\left(q_{1}, r_{1}\right)$ are $\frac{d+2\alpha+2}{2}$-admissible pairs, then there exits a constant $C$ such that
\begin{align*}
\left\|\mathcal{I}_{\alpha}(\cdot) g\right\|_{L^{q}\left(\mathbb{R} ; L_{\alpha}^{r}\left(\mathbb{R}^{d}\right)\right)} \leq& C\|g\|_{\alpha,2}\\\left\|\Phi_{\alpha}(F)\right\|_{L^{q}\left(I ; L_{\alpha}^{r}\left(\mathbb{R}^{d}\right)\right)} \leq& C\|F\|_{\left.L^{q_{1}^{\prime}}\left(I ; L_{\alpha}\right.^{r_{1}^{\prime}}\left(\mathbb{R}^{d}\right)\right)}
\end{align*}
\end{coro}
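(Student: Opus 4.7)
The plan is to deduce Corollary \ref{co3} directly from Theorem \ref{TH3} applied to the one-parameter family $U(t) = \mathcal{I}_{\alpha}(t)$. First I would verify the two hypotheses. Unitarity of $\mathcal{I}_{\alpha}(t)$ on $L_{\alpha}^{2}(\mathbb{R}_{+}^{d+1})$ is immediate from Plancherel's formula together with the definition \eqref{u}, since the Weinstein multiplier $e^{-it|\xi|^{2}}$ has modulus one. The dispersive bound $\|\mathcal{I}_{\alpha}(t-s)g\|_{\alpha,\infty} \leq C|t-s|^{-(d+2\alpha+2)/2}\|g\|_{\alpha,1}$ is then read off from the explicit convolution representation \eqref{u2} supplied by Examples \ref{ex} and \ref{ex3}: the convolution kernel has constant modulus $(2|t-s|)^{-(d+2\alpha+2)/2}$, and Young's inequality, namely Proposition \ref{pro5}(i) with parameters $(p,q,r)=(1,\infty,\infty)$, closes the argument.

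For the homogeneous inequality I would simply apply \eqref{eq11} with $U(t)=\mathcal{I}_{\alpha}(t)$ to obtain the bound on $L^{q}(\mathbb{R};L_{\alpha}^{r})$, from which restriction to any subinterval $I$ follows automatically. For the Duhamel-type inequality I would proceed in two stages. The semigroup property $\mathcal{I}_{\alpha}(t)\mathcal{I}_{\alpha}^{*}(s) = \mathcal{I}_{\alpha}(t-s)$ follows from \eqref{u} and unitarity. Applying \eqref{eq13} to the admissible pairs $(q,r)$ and $(q_{1},r_{1})$ furnishes the non-retarded bound
\begin{equation*}
\Big\| \int_{\mathbb{R}} \mathcal{I}_{\alpha}(t-s) F(s,\cdot)\, ds \Big\|_{L^{q}(\mathbb{R};L_{\alpha}^{r})} \leq C \|F\|_{L^{q_{1}'}(\mathbb{R};L_{\alpha}^{r_{1}'})}.
\end{equation*}
To replace the full time integral by the retarded one $\Phi_{\alpha}(F)(t)=\int_{0}^{t} \mathcal{I}_{\alpha}(t-s) F(s,\cdot)\, ds$, I would invoke the Christ-Kiselev lemma, which promotes a translation-invariant convolution bound into a bound on its retarded truncation whenever $q > q_{1}'$. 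For a general interval $I$ one extends $F$ by zero outside $I$ and applies the previous estimate on $\mathbb{R}$.

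The main obstacle is the double-endpoint case $(q,r)=(q_{1},r_{1})=P$, where $q=q_{1}'$ and Christ-Kiselev is not available. At this endpoint I would fall back on the Keel-Tao bilinear $TT^{*}$ argument tailored to the Weinstein setting: decompose the time domain dyadically into blocks $\{|t-s|\sim 2^{j}\}$, bound the diagonal contributions by interpolating between the $L_{\alpha}^{2}\to L_{\alpha}^{2}$ isometry coming from Plancherel and the $L_{\alpha}^{1}\to L_{\alpha}^{\infty}$ dispersive bound \eqref{eq2}, and sum the off-diagonal pieces geometrically using duality and atomic decomposition in $L^{q}_{t}L_{\alpha}^{r}$. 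This recovers the retarded endpoint Strichartz estimate and completes the proof.
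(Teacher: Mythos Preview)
Your approach is essentially the one the paper takes. The paper gives no separate proof of Corollary~\ref{co3}; it is meant to follow immediately from Theorem~\ref{th4}, whose proof consists precisely of verifying the energy bound \eqref{eq1} and the dispersive bound \eqref{eq2} for $U(t)=\mathcal{I}_{\alpha}(t)$ and then invoking the abstract Theorem~\ref{TH3} (with everything deferred to Keel--Tao~\cite{8}). Your write-up is simply more explicit: you spell out the Christ--Kiselev step needed to pass from the non-retarded estimate \eqref{eq13} to the retarded operator $\Phi_{\alpha}$, and you flag the double-endpoint case where Christ--Kiselev fails and one must go back to the Keel--Tao bilinear argument. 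The paper silently absorbs all of this into the citation of~\cite{8}, so your version is more careful but not a different route.
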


\begin{propo}
	If $p \in[2, \infty]$ and $t \neq 0$, then $\mathcal{I}_{\alpha}(t)$ maps $L_{\alpha}^{p^{\prime}}\left(\mathbb{R}^{d+1}_+\right)$ continuously to $L_{\alpha}^{p}\left(\mathbb{R}^{d+1}_+\right)$ and
	\begin{align}\label{eq3}
	\left\|\mathcal{I}_{\alpha}(t) g\right\|_{\alpha,p} \leq \frac{1}{\left(2|t|\right)^{(d+2\alpha+2)\left(\frac{1}{2}-\frac{1}{p}\right)}}\|g\|_{{\alpha},{p^{\prime}}}
	\end{align}
\end{propo}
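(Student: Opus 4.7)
The plan is to reduce the claim to the two endpoint cases $p=2$ and $p=\infty$ and then obtain the intermediate exponents by the Riesz--Thorin interpolation theorem, exactly as one does for the classical free Schrödinger propagator.

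First I would handle $p=2$. Since $\mathcal{I}_{\alpha}(t)$ is defined in \eqref{u} as multiplication by the unimodular symbol $e^{-it|\xi|^{2}}$ on the Weinstein side, Plancherel's formula (the identity in part (ii) of the Plancherel theorem, extended to $L_{\alpha}^{2}$ by its part (iii)) immediately gives
\[
\|\mathcal{I}_{\alpha}(t)g\|_{\alpha,2}=\|g\|_{\alpha,2},
\]
which is the case $p=2$ of the statement (the exponent $(d+2\alpha+2)(\tfrac12-\tfrac1p)$ vanishes at $p=2$).

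Next I would establish the dispersive endpoint $p=\infty$. Using the explicit representation \eqref{u2}, I would bound pointwise
\[
\bigl|\mathcal{I}_{\alpha}(t)g(x)\bigr|\leq \frac{1}{(2|t|)^{(d+2\alpha+2)/2}}\Bigl|\bigl[\mathcal{F}_{W}^{\alpha,d}\bigl(e^{i|\cdot|^{2}/(4t)}g\bigr)\bigr]\!\left(\tfrac{x}{2t}\right)\Bigr|,
\]
and then invoke the $L^{1}_{\alpha}\to L^{\infty}_{\alpha}$ bound for the Weinstein transform from Proposition \ref{l1}(i), noting that the Gaussian-type phase $e^{i|\cdot|^{2}/(4t)}$ is unimodular and so does not alter the $L^{1}_{\alpha}$ norm. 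This yields
\[
\|\mathcal{I}_{\alpha}(t)g\|_{\alpha,\infty}\leq \frac{1}{(2|t|)^{(d+2\alpha+2)/2}}\|g\|_{\alpha,1},
\]
which is the case $p=\infty$ (matching the exponent $(d+2\alpha+2)(\tfrac12-0)$).

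Finally, for an arbitrary $p\in(2,\infty)$, I would interpolate the two bounds by Riesz--Thorin with parameter $\theta=1-\tfrac{2}{p}$: writing $\tfrac{1}{p'}=\tfrac{1-\theta}{2}+\tfrac{\theta}{1}$ and $\tfrac{1}{p}=\tfrac{1-\theta}{2}+\tfrac{\theta}{\infty}$, the operator norm of $\mathcal{I}_{\alpha}(t):L_{\alpha}^{p'}\to L_{\alpha}^{p}$ is bounded by
\[
1^{\,1-\theta}\cdot\Bigl((2|t|)^{-(d+2\alpha+2)/2}\Bigr)^{\theta}=(2|t|)^{-(d+2\alpha+2)(\tfrac{1}{2}-\tfrac{1}{p})},
\]
which is exactly the desired constant. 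The spaces $L^{p}_{\alpha}(\mathbb{R}^{d+1}_{+})$ are just $L^{p}$ of the $\sigma$-finite measure $d\mu_{\alpha,d}$, so the classical Riesz--Thorin theorem applies verbatim. I do not expect any serious obstacle: the only point that needs some care is verifying that the pointwise representation \eqref{u2}, derived a priori on nice functions from Examples \ref{ex} and \ref{ex3}, extends to $L_{\alpha}^{1}\cap L_{\alpha}^{2}$ by a density argument so that the endpoint bounds make sense on a dense class before interpolation is applied.
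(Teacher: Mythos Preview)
Your proposal is correct and follows essentially the same strategy as the paper: establish the two endpoint bounds at $p=2$ (via Plancherel) and $p=\infty$ (the dispersive estimate), then interpolate. The paper's proof simply cites the already-recorded identities \eqref{eq1} and \eqref{eq2} and says ``the general case is obtained by interpolation between the cases $p=2$ and $p=\infty$''; your write-up supplies more detail (explicit Riesz--Thorin parameters, the density remark), and for the $L^{\infty}$ endpoint you use the second line of \eqref{u2} together with Proposition~\ref{l1}(i) rather than the convolution form plus Young's inequality (Proposition~\ref{pro5}) as the paper does, but these are equivalent routes to the same bound.
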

\begin{proof}
It follows from \eqref{eq1} and \eqref{eq2} that
	$$
	\left\|\mathcal{I}_{\alpha}(t) g\right\|_{{\alpha},{\infty}} \leq \frac{1}{|2t|^{\frac{d+2\alpha+2}{2}}}\|g\|_{{\alpha},{1}} \quad \text { and } \quad\left\|\mathcal{I}_{\alpha}(t) g\right\|_{{\alpha},{2}}=\|g\|_{{\alpha},{2}}
	$$
	The general case is obtained by interpolation between the cases $p=2$ and $p=\infty$
	\end{proof}
\begin{propo}
	Let $I$ be an interval of $\mathbb{R}$ (bounded or not). Assume $2<r<\frac{2 d+8 \alpha+6}{d+4 \alpha-1}$ and let $\left(q_{1}, r_{1}\right) \in(1, \infty)^{2}$ satisfy
	$$
	\frac{1}{q_{1}}+\frac{1}{r_{1}}=(d+2\alpha+2)\left(\frac{1}{2}-\frac{1}{r}\right)
	$$
	Then $\Phi_{\alpha}(F) \in L^{q_{1}}\left(I ; L_{\alpha}^{r}\left(\mathbb{R}^{d+1}_+\right)\right)$ for every $F \in L^{r_{1}^{\prime}}\left(I ; L_{\alpha}^{r^{\prime}}\left(\mathbb{R}^{d+1}_+\right)\right) .$ Moreover, there exits a constant $C$ independent on $I$ such that

	\begin{align}\label{eq4}
	&\left\|\Phi_{\alpha}(F)\right\|_{L^{q_{1}}\left(I ; L_{\alpha}^{r}\left(\mathbb{R}^{d+1}_+\right)\right)} \leq C\|F\|_{L^{r_{1}^{\prime}} \left(I ; L_{\alpha}^{r^{\prime}}\left(\mathbb{R}^{d+1}_+\right)\right)} \\
	&\qquad \text { for every } \quad F \in L^{r_{1}^{\prime}}\left(I ; L_{\alpha}^{r^{\prime}}\left(\mathbb{R}^{d+1}_+\right)\right)\nonumber
	\end{align}

\end{propo}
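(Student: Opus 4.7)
The plan is to reduce the claim to the one-dimensional Hardy-Littlewood-Sobolev inequality by combining it with the pointwise dispersive bound \eqref{eq3}. First, applying Minkowski's inequality for integrals followed by \eqref{eq3} with exponent $r$ gives, for each $t\in I$,
$$
\|\Phi_{\alpha}(F)(t)\|_{\alpha,r}
\le\int_I \|\mathcal{I}_\alpha(t-s)F(s)\|_{\alpha,r}\,ds
\le C\int_I |t-s|^{-\sigma}\|F(s)\|_{\alpha,r'}\,ds,
$$
where $\sigma:=(d+2\alpha+2)\bigl(\tfrac12-\tfrac1r\bigr)$. After extending $s\mapsto \|F(s)\|_{\alpha,r'}\mathbf{1}_{I}(s)$ by zero to all of $\R$, the right-hand side is exactly a one-dimensional Riesz potential of order $1-\sigma$ applied to this scalar function.

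Next, I read off the scaling condition for HLS: the Riesz potential of order $1-\sigma$ maps $L^{r_1'}(\R)\to L^{q_1}(\R)$ precisely when $\tfrac{1}{r_1'}-\tfrac{1}{q_1}=1-\sigma$, which rearranges to $\tfrac{1}{q_1}+\tfrac{1}{r_1}=\sigma$. This is exactly the gap condition hypothesized in the proposition. Applying the classical Hardy-Littlewood-Sobolev inequality on $\R$ to the scalar function $s\mapsto \|F(s)\|_{\alpha,r'}$ then yields
$$
\|\Phi_{\alpha}(F)\|_{L^{q_1}(I;L_\alpha^r)}
\le C\left\|\int_I |t-s|^{-\sigma}\|F(s)\|_{\alpha,r'}\,ds\right\|_{L^{q_1}_t(\R)}
\le C\|F\|_{L^{r_1'}(I;L_\alpha^{r'})},
$$
with a constant depending only on the exponents and not on $I$, since after extension by zero the estimate is reduced to an inequality on the whole real line with a universal constant.

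The main obstacle is the exponent bookkeeping needed to apply HLS legitimately, namely $0<\sigma<1$ together with $1<r_1'<q_1<\infty$. The upper restriction on $r$ combined with $r>2$ is precisely the hypothesis that forces $\sigma\in(0,1)$, and hence permits one to realize the kernel $|t-s|^{-\sigma}$ as a fractional integration kernel on $\R$; the assumption $(q_1,r_1)\in(1,\infty)^2$ guarantees $1<r_1'<\infty$ and $1<q_1<\infty$; finally, the strict inequality $r_1'<q_1$ follows automatically from $\sigma>0$ via the scaling identity. Once these inequalities are verified, the proof reduces to a direct invocation of the classical one-dimensional Hardy-Littlewood-Sobolev inequality, and the independence of the constant $C$ on $I$ is built in by the extension-by-zero argument.
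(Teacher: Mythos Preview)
Your proof is correct and follows essentially the same route as the paper: apply the dispersive bound \eqref{eq3} under the integral to reduce to a one-dimensional fractional integration in the time variable, then invoke the Hardy--Littlewood--Sobolev inequality. Your version is in fact more complete, since you spell out the exponent bookkeeping (that $\sigma\in(0,1)$ and the scaling relation $\tfrac{1}{r_1'}-\tfrac{1}{q_1}=1-\sigma$) needed for HLS, whereas the paper simply asserts that the estimate ``is an immediate consequence of Hardy--Littlewood--Sobolev inequality.''
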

\begin{proof}
	Proof. By density, we need to prove \eqref{eq4} for $F \in C_{c}\left(I ; \mathcal{S}_*\left(\mathbb{R}^{d+1}\right)\right) .$ It follows from \eqref{eq3} that
	$$
	\left\|\Phi_{\alpha}(F)(t, \cdot)\right\|_{{\alpha},{r}} \leq \int_{0}^{t} \frac{1}{\left((2|t-s|)^{4 \alpha+d+3}\right)^{\left(\frac{1}{2}-\frac{1}{r}\right)}}\|F(s, \cdot)\|_{{\alpha},{r^{\prime}}} d s
	$$
	and so \eqref{eq4} is an immediate consequence of Hardy-Littlewood-Sobolev inequality.
	\end{proof}
\subsection{$\mathrm{L}^{2}_{\alpha}-\text {Solutions for nonlinear Weinstein-Schrödinger equations }$}
In this subsection, Strichartz estimates are a powerful tool to prove local and global well-posedness results for the nonlinear Weinstein-Schrödingen equations (SW), for this we recall the definition of well-posedness.
\begin{defin} We say that the problem (SW) is locally well-posed in $L_{\alpha}^{2}\left(\mathbb{R}^{d+1}_+\right)$ if, for every $g$ in $L_{\alpha}^{2}\left(\mathbb{R}^{d+1}_+\right)$, one can find time $T>0$ and a unique solution $u \in C\left([-T, T], L_{\alpha}^{2}\left(\mathbb{R}^{d+1}_+\right)\right) \cap X$ to (SW) which depends continuously on the data, with $X$ some additional Banach space. The equation is globally well-posed if these properties hold with $T=\infty$.
\end{defin}
\begin{theorem}\label{th5}
If $p \in\left(0, \frac{4}{d+2\alpha+2}\right]$, then for every $g \in L_{\alpha}^{2}\left(\mathbb{R}^{d+1}_+\right)$, there exist $T_{\max }, T_{\min } \in(0, \infty]$ and a unique, maximal solution $u$ of $(SW)$ belonging to
	$$
	C\left(\left(-T_{\min }, T_{\max }\right) ; L_{\alpha}^{2}\left(\mathbb{R}^{d+1}_+\right)\right) \bigcap L_{l o c}^{q}\left(\left(-T_{\min }, T_{\max }\right) ; L_{\alpha}^{r}\left(\mathbb{R}^{d+1}_+\right)\right)
	$$
	for every sharp $\frac{d+2\alpha+2}{2}$-admissible pair $(q, r)$. Moreover, the following properties hold:\\
 There exists $\delta_{0}>0$ such that if $g \in L_{\alpha}^{2}\left(\mathbb{R}^{d+1}_+\right)$ satisfies $\|g\|_{L_{\alpha}^{2}\left(\mathbb{R}^{d+1}_+\right)} \leq \delta_{0}$ then the corresponding maximal $L_{\alpha}^{2}$-solution is global, i.e., $T_{\max }=\hat{T}_{\min }=\infty$.
	Moreover, u belongs to $L^{q}\left(\mathbb{R} ; L_{\alpha}^{r}\left(\mathbb{R}^{d+1}_+\right)\right)$ for every sharp $\frac{d+2\alpha+2}{2}$-admissible pair $(q, r)$.
\end{theorem}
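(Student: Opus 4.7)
The plan is to realise $u$ as the unique fixed point of the Duhamel map
$$\mathcal{M}_{g}(u)(t) := \mathcal{I}_{\alpha}(t)g + \Phi_{\alpha}(F(u))(t)$$
on an appropriate Strichartz-type metric space, using the estimates from Theorem \ref{th4} and Corollary \ref{co3}. As the working admissible pair I fix $r = p+2$; the sharp admissibility condition \eqref{sh} then forces
$$q = \frac{4(p+2)}{(d+2\alpha+2)\,p}.$$
The hypothesis $p \leq \tfrac{4}{d+2\alpha+2}$ guarantees exactly $q \geq p+2 \geq 2$, which is both the admissibility lower bound and the condition needed to run a Hölder step in time. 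For $T>0$ I would work in
$$X_{T} := C\bigl([-T,T];\, L^{2}_{\alpha}(\mathbb{R}^{d+1}_{+})\bigr)\;\cap\; L^{q}\bigl([-T,T];\, L^{r}_{\alpha}(\mathbb{R}^{d+1}_{+})\bigr),$$
equipped with the sum norm, and denote by $B_{M,T}$ its closed ball of radius $M$.

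Applying the Strichartz estimate with $(q_{1},r_{1})=(q,r)$ to $\mathcal{M}_{g}(u)$ yields
$$\|\mathcal{M}_{g}(u)\|_{X_{T}} \leq C\|g\|_{\alpha,2} + C\|F(u)\|_{L^{q'}([-T,T];\, L^{r'}_{\alpha})}.$$
Since $r=p+2$ implies $(p+1)r'=r$, the pointwise bound \eqref{eq} gives $\|F(u)(t)\|_{\alpha,r'}\leq C\|u(t)\|_{\alpha,r}^{p+1}$, and Hölder in $t$ on $[-T,T]$ (legitimate because $q\geq p+2$) produces
$$\|F(u)-F(v)\|_{L^{q'}([-T,T];\, L^{r'}_{\alpha})} \leq C\, T^{\theta}\bigl(\|u\|_{L^{q}L^{r}_{\alpha}}^{p}+\|v\|_{L^{q}L^{r}_{\alpha}}^{p}\bigr)\|u-v\|_{L^{q}L^{r}_{\alpha}},$$
where $\theta = 1-\tfrac{p+2}{q}\geq 0$ and $\theta>0$ precisely in the strictly subcritical range.

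In the subcritical regime $\theta>0$, taking $M$ comparable to $\|g\|_{\alpha,2}$ and $T$ so small that $CT^{\theta}(2M)^{p}\leq \tfrac14$ makes $\mathcal{M}_{g}$ a contraction on $B_{M,T}$, which produces a local solution. The main obstacle is the critical endpoint $p=\tfrac{4}{d+2\alpha+2}$, where $q=p+2$ and $\theta=0$, so no smallness in $T$ is available from Hölder. Here I would exploit that $\mathcal{I}_{\alpha}(\cdot)g \in L^{q}(\mathbb{R};L^{r}_{\alpha})$ by \eqref{eq11}, so $\|\mathcal{I}_{\alpha}(\cdot)g\|_{L^{q}([-T,T];L^{r}_{\alpha})}\to 0$ as $T\to 0$ by dominated convergence. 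Choosing $M$ small enough that $CM^{p+1}\leq M/4$ and then $T$ small enough that $\|\mathcal{I}_{\alpha}(\cdot)g\|_{L^{q}([-T,T];L^{r}_{\alpha})}\leq M/2$ closes the contraction. In both cases, standard gluing and the blow-up alternative give a maximal interval $(-T_{\min},T_{\max})$, and the Lipschitz estimate delivers uniqueness inside the Strichartz class together with continuous dependence on the initial data.

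To upgrade membership to every sharp $\tfrac{d+2\alpha+2}{2}$-admissible pair $(\tilde q,\tilde r)$, I would reapply the inhomogeneous Strichartz estimate \eqref{eq13} with $(\tilde q,\tilde r)$ on the left and the working pair $(q,r)$ on the right; the right-hand side is finite on each compact subinterval of $(-T_{\min},T_{\max})$ by what has already been established. Finally, for the small-data global assertion, I would run the fixed-point argument directly on $I=\mathbb{R}$: the global Strichartz estimate gives $\|\mathcal{I}_{\alpha}(\cdot)g\|_{L^{q}(\mathbb{R};L^{r}_{\alpha})}\leq C\|g\|_{\alpha,2}\leq C\delta_{0}$, so choosing $\delta_{0}$ below the critical smallness threshold from the previous step yields a global fixed point in $L^{q}(\mathbb{R};L^{r}_{\alpha})\cap C(\mathbb{R};L^{2}_{\alpha})$; the same bootstrap places $u$ in every admissible Strichartz space, whence $T_{\max}=T_{\min}=\infty$.
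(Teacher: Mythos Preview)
Your proposal is correct and follows essentially the same route as the paper: fixed point for the Duhamel map in the Strichartz space built on the working pair $(q,p+2)$ with $q=\tfrac{4(p+2)}{(d+2\alpha+2)p}$, the H\"older-in-time factor $T^{\theta}$ with $\theta=\tfrac{4-(d+2\alpha+2)p}{4}$ in the subcritical range, the smallness of $\|\mathcal{I}_{\alpha}(\cdot)g\|_{L^{p+2}((-T,T);L^{p+2}_{\alpha})}$ at the critical endpoint, a bootstrap to all admissible pairs via \eqref{eq13}, and the small-data contraction directly on $I=\mathbb{R}$. The only presentational difference is that at the endpoint $p=\tfrac{4}{d+2\alpha+2}$ the paper runs the contraction in the $L^{p+2}_{t}L^{p+2}_{\alpha}$ ball alone (recovering the $C_tL^2_{\alpha}$ bound a posteriori from Strichartz), whereas you keep the full $X_T$ norm; since your smallness conditions in that step are phrased only in terms of the $L^{q}L^{r}_{\alpha}$ component, the two arguments are effectively the same once you note that the $L^\infty_t L^2_\alpha$ radius should be taken of size $\|g\|_{\alpha,2}$ rather than the small $M$.
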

\begin{proof}
	Proof. We proceed in three steps.\\
	Step 1: (Local existence). For the existence, we use a fixed point argument.\\
	- If $p \in\left(0, \frac{4}{d+2\alpha+2}\right)$, fix $T, M>0$ and set
	$$
	\begin{aligned}
	X_{M}:=\left\{u \in L^{q}\left((-T, T) ; L_{\alpha}^{p+2}\left(\mathbb{R}^{d+1}_+\right)\right) \bigcap L^{\infty}\left((-T, T) ; L_{\alpha}^{2}\left(\mathbb{R}^{d+1}_+\right)\right):\right.& \\
	\|u\|_{L^{\infty}\left((-T, T) ; L_{\alpha}^{2}\left(\mathbb{R}^{d+1}_+\right)\right)}+\|u\|_{L^{q}\left((-T, T) ; L_{\alpha}^{p+2}\left(\mathbb{R}^{d+1}_+\right)\right)} \leq M &\}
	\end{aligned}
	$$
	where $(q, p+2)$ is sharp $\frac{d+2\alpha+2}{2}$-admissible pair. Note that by Theorem \ref{th4} and Corollary \ref{co3} , this space is never empty. Moreover, it is easily checked that $X_{M}$ is a complete metric space when equipped with the distance
	$$
	d(u, v)=\|u-v\|_{L^{\infty}\left((-T, T) ; L_{\alpha}^{2}\left(\mathbb{R}^{d+1}_+\right)\right)}+\|u-v\|_{L^{q}\left((-T, T) ; L_{\alpha}^{p+2}\left(\mathbb{R}^{d+1}_+\right)\right)}
	$$
	For simplify, we put
	$$
	\|v\|_{X_{M}}=\|v\|_{L^{\infty}\left((-T, T) ; L_{\alpha}^{2}\left(\mathbb{R}^{d}\right)\right)}+\|v\|_{L^{q}\left((-T, T) ; L_{\alpha}^{p+2}\left(\mathbb{R}^{d+1}_+\right)\right)}
	$$
	if $v \in X_{M}$. Take $g \in L_{\alpha}^{2}\left(\mathbb{R}^{d+1}_+\right)$. We wish to find conditions on $T$ and $M$ which imply that $\mathcal{H}_{\alpha}$, given by
	$$
	\mathcal{H}_{\alpha}(u)(t, \cdot):=\mathcal{I}_{\alpha}(t) g(\cdot)+\int_{0}^{t} \mathcal{I}_{\alpha}(t-s) F(u(s, \cdot)) d s
	$$
	is a strict contraction on $X_{M}$. By our nonlinearity assumption (\ref{eq}) and Theorem \ref{th4} the following estimate holds
	$$
	\left\|\mathcal{H}_{\alpha}(u)\right\|_{X_{M}} \leq C\left(\left\|\mathcal{I}_{\alpha}(\cdot) g\right\|_{X_{M}}+\|u\|_{L^{(p+1) q_{1}^{\prime}}}^{p+1}\left((-T, T) ; L_{\alpha}^{(p+1) r_{1}^{\prime}}\left(\mathbb{R}^{d+1}_+\right)\right)\right)
	$$
	with $\left(q_{1}, r_{1}\right)$ a sharp $\frac{d+2\alpha+2}{2}$-admissible couple.\\
	- If $p \in\left(0, \frac{4}{d+2\alpha+2}\right)$, we take $r_{1}=p+2$ and $\left(q_{1}=q, p+2\right)$ a sharp $\frac{d+2\alpha+2}{2}$ admissible pair such that $q>p+2$. Then applying Corollary \ref{co3} and Hölder's inequality in time we obtain
	\begin{align}\label{eq21}
	\left\|\mathcal{H}_{\alpha}(u)\right\|_{X_{M}} \leq C\|g\|_{{\alpha},{2}}+C T^{\frac{q-p-2}{q}}\|u\|_{L^{q}\left((-T, T) ; L_{\alpha}^{p+2}\left(\mathbb{R}^{d+1}_+\right)\right)}^{p+1}
	\end{align}
	Hence for every $u \in X_{M}$ one has
	$$
	\left\|\mathcal{H}_{\alpha}(u)\right\|_{X_{M}} \leq C\|g\|_{{\alpha},{2}}+C T^{\frac{q-p-2}{q}} M^{p+1}
	$$
	Choosing $M=2 C\|g\|_{L_{\alpha}^{2}\left(\mathbb{R}^{d+1}_+\right)}$, we see that if $T$ is sufficiently small (depending on $\|g\|_{L_{\alpha}^{2}\left(\mathbb{R}^{d+1}_+\right)}$ ) then $\mathcal{H}_{\alpha}(u) \in X_{M}$ for all $u \in X_{M}$. Moreover, arguing as above we obtain
	$$
	d\left(\mathcal{H}_{\alpha}(u), \mathcal{H}_{\alpha}(v)\right) \leq C T^{\frac{q-p-2}{q}} M^{p} d(u, v)
	$$
	for all $u, v \in X_{M}$. Thus $\mathcal{H}_{\alpha}$ is a contraction in $X_{M}$ provided $T$ is small enough,
	more precisely if $T \leq\left(\frac{1}{2 C M^{p}}\right)^{\frac{q}{q-p-2}}$. Hence $\mathcal{H}_{\alpha}$ has a fixed point $u$, which is the unique solution of (SW) in $X_{M}$, and there exist $T_{\max }, T_{\min } \in(0, \infty]$ such that $u$ belongs to
	$$
	C\left(\left(-T_{\min }, T_{\max }\right) ; L_{\alpha}^{2}\left(\mathbb{R}^{d+1}_+\right)\right) \bigcap L_{l o c}^{q}\left(\left(-T_{\min }, T_{\max }\right) ; L_{\alpha}^{r}\left(\mathbb{R}^{d+1}_+\right)\right)
	$$
	for the sharp $\frac{d+2\alpha+2}{2}$-admissible pair $(q, p+2)$, with\\
	$T_{\max }=\sup \{T>0$, there exists a solution of $(SW)$ on $[0, T]\}$\\
	$T_{\min }=\sup \{T>0$, there exists a solution of $(SW)$ on $[-T, 0]\}$\\
	Moreover, from Theorem \ref{th4} and by the argument we use to prove (\ref{eq21}), it is easy to see that $u \in L_{l o c}^{q_{1}}\left(\left(-T_{\min }, T_{\max }\right) ; L_{\alpha}^{r_{1}}\left(\mathbb{R}^{d+1}_+\right)\right)$ for every sharp $\frac{d+2\alpha+2}{2}$-admissible pair $\left(q_{1}, r_{1}\right)$\\
	- If $p=\frac{4}{d+2\alpha+2}$, let $g \in L_{\alpha}^{2}\left(\mathbb{R}^{d+1}_+\right) .$ Since $\mathcal{I}_{\alpha}(\cdot) g \in L^{p+2}\left(\mathbb{R} ; L_{\alpha}^{p+2}\left(\mathbb{R}^{d+1}_+\right)\right)$, by Corollary \ref{co3} , we have
	\begin{align}\label{eq22}
	\left\|\mathcal{I}_{\alpha}(.) g\right\|_{L^{p+2}\left((-T, T) ; L_{\alpha}^{p+2}\left(\mathbb{R}^{d+1}_+\right)\right)} \rightarrow 0 \quad \text { as } \quad T \downarrow 0
	\end{align}
	Therefore there exist $M, T>0$ such that
	\begin{align}\label{eq23}
	\left\|\mathcal{I}_{\alpha}(.) g\right\|_{L^{p+2}\left((-T, T) ; L_{\alpha}^{p+2}\left(\mathbb{R}^{d+1}_+\right)\right)}<M
\end{align}
	Let us consider the set
	$$
	X_{M}:=\left\{u \in L^{p+2}\left((-T, T) ; L_{\alpha}^{p+2}\left(\mathbb{R}^{d+1}_+\right)\right):\|u\|_{L^{p+2}\left((-T, T) ; L_{\alpha}^{p+2}\left(\mathbb{R}^{d+1}_+\right)\right)} \leq 2 M\right\}
	$$
	It is easily checked that $X_{M}$ is a complete metric space when equipped with the distance
	$$
	d(u, v)=\|u-v\|_{L^{p+2}\left((-T, T) ; L_{\alpha}^{p+2}\left(\mathbb{R}^{d+1}_+\right)\right)}
	$$
	As above, by Theorem \ref{th4} , the following estimate holds
	$$
	\begin{aligned}
	&\left\|\mathcal{H}_{\alpha}(u)\right\|_{L^{p+2}\left((-T, T) ; L_{\alpha}^{p+2}\left(\mathbb{R}^{d+1}_+\right)\right)} \\
	&\quad \leq C\left(\left\|\mathcal{I}_{\alpha}(\cdot) g\right\|_{L^{p+2}\left((-T, T) ; L_{\alpha}^{p+2}\left(\mathbb{R}^{d+1}_+\right)\right)}+\|u\|_{L^{(p+2)}\left((-T, T) ; L_{\alpha}^{(p+2)}\left(\mathbb{R}^{d+1}_+\right)\right)}^{p+1}\right)
	\end{aligned}
	$$
	where we have taken $q=q_{1}=r=r_{1}=p+2$. Hence, for every $u \in X_{M}$ :
	$$
	\left\|\mathcal{H}_{\alpha}(u)\right\|_{L^{p+2}\left((-T, T) ; L_{\alpha}^{p+2}\left(\mathbb{R}^{d+1}_+\right)\right)} \leq C\left\|\mathcal{I}_{\alpha}(\cdot) g\right\|_{L^{p+2}\left((-T, T) ; L_{\alpha}^{p+2}\left(\mathbb{R}^{d+1}_+\right)\right)}+C M^{p+1}
	$$
	From relations $(\ref{eq22})$ and $(\ref{eq23})$ above, we see that if $T$ is small enough, then we can choose $M$ such that $\mathcal{H}_{\alpha}(u)$ belongs to $X_{M}$ for all $u \in X_{M} .$ As above we prove also that $\mathcal{H}_{\alpha}$ is a contraction on the space $X_{M}$ provided $T$ is sufficiently small. Thus $\mathcal{H}_{\alpha}$ has a fixed point $u$, which is the unique solution of (SW) in $X_{M}$
	Moreover, from Theorem \ref{th4} it is easy to see that there exist $T_{\max }, T_{\min } \in(0, \infty]$ such that
	$$
	u \in C\left(\left(-T_{\min }, T_{\max }\right) ; L_{\alpha}^{2}\left(\mathbb{R}^{d+1}_+\right)\right) \bigcap L_{l o c}^{q_{1}}\left(\left(-T_{\min }, T_{\max }\right) ; L_{\alpha}^{r_{1}}\left(\mathbb{R}^{d+1}_+\right)\right)
	$$
	for every sharp $\frac{d+2\alpha+2}{2}$-admissible pair $\left(q_{1}, r_{1}\right)$.\\
	
	Step 2: (Uniqueness). We first note that the uniqueness is a local property, so that we need only to establish it on possibly small intervals. To see this, we argue for positive times, the case for negative times being the same. Suppose that $u_{1}, u_{2} \in C\left([0, T] ; L_{\alpha}^{2}\left(\mathbb{R}^{d+1}_+\right)\right) \bigcap L_{l o c}^{q}\left((0, T) ; L_{\alpha}^{r}\left(\mathbb{R}^{d+1}_+\right)\right)$ are two solutions of (SW) and assume that $u_{1}(t) \neq u_{2}(t)$ for some $t \in[0, T] .$ Let $t_{0}=\inf \left\{t \in[0, T], u_{1}(t) \neq\right.$ $\left.u_{2}(t)\right\}$. Since both $u_{1}$ and $u_{2}$ are continuous into $L_{\alpha}^{2}\left(\mathbb{R}^{d+1}_+\right)$, this definition makes sense and $u_{1}\left(t_{0}\right)=u_{2}\left(t_{0}\right)=\chi .$ Moreover, the curves $U_{1}(t)=u_{1}\left(t+t_{0}\right)$ and $U_{2}(t)=u_{2}\left(t+t_{0}\right)$ both satisfy the equation $w=\mathcal{I}_{\alpha}(\cdot) \chi+\Phi_{\alpha}(F(w))$ on $\left[0, T-t_{0}\right]$ As above we apply Theorem \ref{th4} and the argument of proof of (\ref{eq21}), to obtain that for all $t \in\left[t_{0}, T\right]$
	$$
	\begin{aligned}
	&\left\|u_{1}-u_{2}\right\|_{L^{q}\left(\left(t_{0}, t\right) ; L_{\alpha}^{p+2}\left(\mathbb{R}^{d+1}_+\right)\right)} \\
	&\quad \leq C\left(t-t_{0}\right)^{\frac{4-(d+2\alpha+2) p}{4}} \sum_{i=1}^{2}\left\|u_{i}\right\|_{L^{q}\left(\left(t_{0}, t\right) ; L_{\alpha}^{p+2}\left(\mathbb{R}^{d+1}_+\right)\right)}^{p}\left\|u_{1}-u_{2}\right\|_{L^{q}\left(\left(t_{0}, t\right) ; L_{\alpha}^{p+2}\left(\mathbb{R}^{d+1}_+\right)\right)}
	\end{aligned}
	$$
	where $\left(q=\frac{4(p+2)}{p(d+2\alpha+2)}, p+2\right)$ is a sharp $\frac{d+2\alpha+2}{2}$-admissible pair. For $t>t_{0}$, but sufficiently close to $t_{0}$, it follows that
	$$
	C\left(t-t_{0}\right)^{\frac{4-(d+2\alpha+2) p}{4}} \sum_{i=1}^{2}\left\|u_{i}\right\|_{L^{q}\left(\left(t_{0}, t\right) ; L_{\alpha}^{p+2}\left(\mathbb{R}^{d+1}_+\right)\right)}^{p}<1
	$$
	and so that $$\left\|u_{1}-u_{2}\right\|_{L^{q}\left(\left(t_{0}, t\right) ; L_{\alpha}^{p+2}\left(\mathbb{R}^{d+1}_+\right)\right)}=0.$$ This contradicts the choice of $t_{0}$, and thus proves that $u_{1}(t)=u_{2}(t)$ for all $t \in[0, T]$.\\
	
	Step 3: (Global existence). As above, by Theorem \ref{th4} the following Strichartz estimate holds
	\begin{align}\label{eq24}
	\left\|\mathcal{H}_{\alpha}(u)\right\|_{X} \leq C\left(\|g\|_{L_{\alpha}^{2}\left(\mathbb{R}^{d+1}_+\right)}+\|F(u)\|_{L^{q^{\prime}}\left(\mathbb{R} ; L_{\alpha}^{r^{\prime}}\left(\mathbb{R}^{d+1}_+\right)\right)}\right)
	\end{align}
	with $(q, r=p+2)$ a sharp $\frac{d+2\alpha+2}{2}$-admissible and $X=C\left(\mathbb{R} ; L_{\alpha}^{2}\left(\mathbb{R}^{d+1}_+\right)\right) \bigcap L^{q}\left(\mathbb{R} ; L_{\alpha}^{r}\left(\mathbb{R}^{d+1}_+\right)\right)$ the Banach space with norm
	$$
	\|v\|_{X}=\|v\|_{L^{q}\left(\mathbb{R} ; L_{\alpha}^{r}\left(\mathbb{R}^{d+1}_+\right)\right)}+\|v\|_{C\left(\mathbb{R} ; L_{\alpha}^{2}\left(\mathbb{R}^{d+1}_+\right)\right)}
	$$
	By our nonlinearity assumption (\ref{eq}) and Hölder inequality we have
	$$
	\left\|\mathcal{H}_{\alpha}(u)\right\|_{X} \leq C\left(\|g\|_{L_{\alpha}^{2}\left(\mathbb{R}^{d+1}_+\right)}+\|u\|_{L^{q}\left(\mathbb{R} ; L_{\alpha}^{r}\left(\mathbb{R}^{d+1}_+\right)\right)}^{p+1}\right)
	$$
	As above, we have proved that $\mathcal{H}_{\alpha}$ maps the Banach space $X$ into itself, and moreover the ball $X_{M}$ into itself, provided $M$ and $\|g\|_{L_{\alpha}^{2}\left(\mathbb{R}^{d+1}_+\right)}$ are small enough, where
	$$
	X_{M}=\left\{u \in X:\|u\|_{X}<M\right\}
	$$
	We assume now that $u_{i} \in X$ is such that
	$$
	\left\|u_{i}\right\|_{X}<M
	$$
	with $M$ small enough, and also that $\|g\|_{L_{\alpha}^{2}\left(\mathbb{R}^{d+1}_+\right)}<\delta$. By (5.75) we note that
	$$
	\left\|\mathcal{H}_{\alpha}(u)\right\|_{X} \leq C \delta+C M^{p+1}<M
	$$
	provided $M, \delta$ are such that $C M^{p}<\frac{1}{2}$ and $C \delta<\frac{M}{2}$. We have also
	$$
	\begin{aligned}
	\| \mathcal{H}_{\alpha}\left(u_{1}\right)-& \mathcal{H}_{\alpha}\left(u_{2}\right)\left\|_{X} \leq C\right\| F\left(u_{1}\right)-F\left(u_{2}\right) \|_{L^{q^{\prime}}\left(\mathbb{R} ; L_{\alpha}^{\prime}\left(\mathbb{R}^{d+1}_+\right)\right)} \\
	& \leq C\left\|u_{1}-u_{2}\right\|_{L^{q}\left(\mathbb{R}; L_{\alpha}^{r}\left(\mathbb{R}^{d+1}_+\right)\right)}\left(\left\|u_{1}\right\|_{L^{q}\left(\mathbb{R} ; L_{\alpha}^{r}\left(\mathbb{R}^{d+1}_+\right)\right)}^{p}+\left\|u_{2}\right\|_{L^{q}\left(\mathbb{R} ; L_{\alpha}^{r}\left(\mathbb{R}^{d+1}_+\right)\right)}^{p}\right) \\
	& \leq\left\|u_{1}-u_{2}\right\|_{X_{M}} 2 C M^{p} \leq \frac{1}{2}\left\|u_{1}-u_{2}\right\|_{X_{M}}
	\end{aligned}
	$$
	provided $M$ is so small that $2 C M^{p}<\frac{1}{2} .$ Thus, if initial data are small enough i.e. $\|g\|_{L_{\alpha}^{2}\left(\mathbb{R}^{d+1}_+\right)}<\delta$, then the map $\mathcal{H}_{\alpha}$ is a contraction and this implies that there exists a unique solution $u(t, x)$ of the Cauchy problem (SW) such that $u(t, x) \in L^{q}\left(\mathbb{R} ; L_{\alpha}^{r}\left(\mathbb{R}^{d+1}_+\right)\right)$ with a couple $(q, r)$ which is sharp $\frac{d+2\alpha+2}{2}$-admissible pair when $1<p \leq \frac{4}{d+2\alpha+2}$. As observed above one can see easily that this is the unique solution in $u(t, x) \in C\left(R, L_{\alpha}^{2}\left(\mathbb{R}^{d+1}_+\right)\right)$ with small initial data in $L_{\alpha}^{2}$. Thus we have proved the global existence as claimed.
	\end{proof}
\begin{propo} We assume that $F$ is as in Theorem \ref{th5}. If $g \in L_{\alpha}^{2}\left(\mathbb{R}^{d+1}_+\right)$ and if $u$ is the maximal solution of $(SW)$, then we have:\\
	
i) If $p \in\left(0, \frac{4}{d+2\alpha+2}\right)$ and $T_{\max }<\infty$ (respectively, $T_{\min }<\infty$ ), then $$\|u(t)\|_{{\alpha},{2}}\rightarrow \infty  \text { as } t \uparrow T_{\max } (\text {respectively, as } .t \downarrow T_{\min })$$.
ii) If $p=\frac{4}{d+2\alpha+2}$ and $T_{\max }<\infty$ (respectively, $T_{\min }<\infty$ ), then $$\|u\|_{L^{q}\left(\left(0, T_{\max }\right) ; L_{\alpha}^{r}\left(\mathbb{R}^{d+1}_+\right)\right)}=\infty\left(\right.\text{respectively, }\left.\|u\|_{L^{q}\left(\left(-T_{\min }, 0\right) ; L_{\alpha}^{r}\left(\mathbb{R}^{d+1}_+\right)\right)}=\infty\right)$$ for every sharp $\frac{d+2\alpha+2}{2}$-admissible pair $(q, r)$ with $r \geq p+2$.
\end{propo}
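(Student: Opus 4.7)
The plan is to argue by contradiction in both cases: assume the stated blow-up criterion fails at $T_{\max}$ and use this to extend $u$ strictly past $T_{\max}$ via the local existence theorem (Theorem \ref{th5}) applied with initial data at a suitable time close to, or equal to, $T_{\max}$. Uniqueness (Step 2 of the proof of Theorem \ref{th5}) then glues the extension to $u$, contradicting the maximality of $T_{\max}$. Only positive times need to be treated; the negative-time statements follow by the symmetry $t\mapsto -t$.

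For (i), the key observation is that in the subcritical regime $p<4/(d+2\alpha+2)$ the local existence time produced in Step 1 of Theorem \ref{th5} is controlled entirely by the $L_\alpha^2$-norm of the datum, through the choice $M=2C\|g\|_{\alpha,2}$ and the contraction threshold $T\leq(2CM^p)^{-q/(q-p-2)}$. Hence for every $K>0$ there exists $\tau(K)>0$ such that any datum of $L_\alpha^2$-norm at most $K$ yields a solution on an interval of length $\tau(K)$. If $\liminf_{t\uparrow T_{\max}}\|u(t)\|_{\alpha,2}\leq K<\infty$, pick $t_0$ close enough to $T_{\max}$ so that $\|u(t_0)\|_{\alpha,2}\leq K+1$ and $T_{\max}-t_0<\tau(K+1)/2$; solving the equation from $u(t_0)$ at time $t_0$ produces a solution on $[t_0,t_0+\tau(K+1)]$ that extends $u$ strictly past $T_{\max}$, contradicting maximality.

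For (ii), in the critical regime $p=4/(d+2\alpha+2)$, I would assume for contradiction that $\|u\|_{L^{q}((0,T_{\max});L_\alpha^{r})}$ is finite for some sharp admissible $(q,r)$ with $r\geq p+2$, and first reduce to the diagonal pair $(p+2,p+2)$ (which is sharp admissible precisely because $p=4/(d+2\alpha+2)$) by applying Theorem \ref{th4} with $(q_1,r_1)=(p+2,p+2)$ together with the nonlinearity estimate $|F(u)|\lesssim|u|^{p+1}$ and the identity $(p+1)(p+2)'=p+2$. This yields $\|u\|_{L^{p+2}((0,T_{\max});L_\alpha^{p+2})}<\infty$. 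Next, using the representation $u(t)=\mathcal{I}_\alpha(t)\bigl[g+\int_0^t \mathcal{I}_\alpha(-s)F(u(s))\,ds\bigr]$ together with the dual Strichartz bound
\begin{equation*}
\Bigl\|\int_{t_1}^{t_2}\mathcal{I}_\alpha(-s)F(u(s))\,ds\Bigr\|_{\alpha,2}\leq C\,\|u\|_{L^{p+2}((t_1,t_2);L_\alpha^{p+2})}^{p+1},
\end{equation*}
absolute continuity of the integral shows that $\mathcal{I}_\alpha(-t)u(t)$ is Cauchy in $L_\alpha^2$ as $t\uparrow T_{\max}$, whence $u(t)$ converges in $L_\alpha^2$ to some $u^\ast$. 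Finally, since $\mathcal{I}_\alpha(\cdot)u^\ast\in L^{p+2}(\mathbb{R};L_\alpha^{p+2})$ by Corollary \ref{co3}, its $L^{p+2}$-norm on $[T_{\max},T_{\max}+\delta]$ falls below the smallness threshold appearing in \eqref{eq23} for $\delta$ small enough, and the fixed point of Step 1 of Theorem \ref{th5} produces a solution on that interval, which glues to $u$ by uniqueness and contradicts the definition of $T_{\max}$.

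The main obstacle is item (ii): unlike in the subcritical case, the local existence time is not controlled by $\|g\|_{\alpha,2}$ alone but by a smallness condition on the linear evolution in a critical Strichartz norm. The crucial step is establishing the existence of the $L_\alpha^2$-limit $u^\ast$ of $u(t)$ at $T_{\max}$; this rests on the dual Strichartz estimate being applicable to $F(u)$ (which requires the diagonal $L^{p+2}L_\alpha^{p+2}$ bound that one obtains by reduction) together with absolute continuity of the resulting integral. The reduction from an arbitrary admissible pair with $r\geq p+2$ to the diagonal $(p+2,p+2)$ is the other technical point that needs to be handled cleanly.
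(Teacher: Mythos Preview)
Your argument for (i) matches the paper's: both exploit that in the subcritical regime the fixed-point lifespan from Step~1 depends only on $\|g\|_{\alpha,2}$. The paper phrases this as the quantitative lower bound $T_{\max}-t\geq \bigl(4C^{2}\|u(t)\|_{\alpha,2}^{p}\bigr)^{-q/(q-p-2)}$, which is just the contrapositive of your extension argument.

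For (ii) at the diagonal pair $r=p+2$, your approach is correct but different from the paper's. You pass to the limit $u^{\ast}=\lim_{t\uparrow T_{\max}}u(t)$ in $L_\alpha^{2}$ via the dual Strichartz bound and restart the equation from $u^{\ast}$ at time $T_{\max}$. The paper instead writes $\mathcal{I}_{\alpha}(\tau)u(t)=u(t+\tau)-\int_{0}^{\tau}\mathcal{I}_{\alpha}(\tau-s)F(u(t+s))\,ds$ and estimates directly
\[
\bigl\|\mathcal{I}_{\alpha}(\cdot)u(t)\bigr\|_{L^{p+2}((0,T_{\max}-t);L_\alpha^{p+2})}\leq \|u\|_{L^{p+2}((t,T_{\max});L_\alpha^{p+2})}+C\|u\|_{L^{p+2}((t,T_{\max});L_\alpha^{p+2})}^{p+1},
\]
which drops below the smallness threshold of Step~1 for $t$ close to $T_{\max}$; one then restarts from $u(t)$ with $t<T_{\max}$ and never needs to construct $u^{\ast}$. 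The paper's route is a line shorter; yours has the advantage of showing explicitly that $u$ extends continuously in $L_\alpha^{2}$ up to $T_{\max}$.

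There is, however, a genuine gap in your reduction from a pair $(q,r)$ with $r>p+2$ to the diagonal one. Applying Theorem~\ref{th4} with $(q_{1},r_{1})=(p+2,p+2)$ and using $(p+1)(p+2)'=p+2$ gives only
\[
\|u\|_{L^{p+2}((0,T);L_\alpha^{p+2})}\leq C\Bigl(\|g\|_{\alpha,2}+\|u\|_{L^{p+2}((0,T);L_\alpha^{p+2})}^{p+1}\Bigr),
\]
which is circular: the right-hand side is controlled by the very norm you want to bound, and the assumed finiteness of $\|u\|_{L^{q}((0,T_{\max});L_\alpha^{r})}$ is never used. The paper handles this step not through Strichartz but by H\"older interpolation between $L^{\infty}L_\alpha^{2}$ and $L^{q}L_\alpha^{r}$:
\[
\|u\|_{L^{p+2}((0,T);L_\alpha^{p+2})}\leq \|u\|_{L^{\infty}((0,T);L_\alpha^{2})}^{\lambda}\,\|u\|_{L^{q}((0,T);L_\alpha^{r})}^{1-\lambda},\qquad \lambda=\frac{2(r-p-2)}{(p+2)(r-2)},
\]
so that finiteness of the $L^{q}L_\alpha^{r}$-norm forces finiteness of the diagonal norm, and then the diagonal case concludes.
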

\begin{proof}
 i) If $p \in\left(0, \frac{4}{d+2\alpha+2}\right)$, it follows from Step 1 of the proof of Theorem \ref{th5} and the uniqueness property that $$T_{\max }-t \geq\left(\frac{1}{4 C^{2}\|u(t, \cdot)\|_{{\alpha},{2}}^{p}}\right)^{\frac{q}{q-p-2}}$$.
	Suppose now that $T_{\max }<\infty$, then 
	$$\|u(t)\|_{{\alpha},{2}} \geq\left(\frac{1}{4 C^{2}\left(T_{\max }-t\right)^{\frac{q-p-2}{q}}}\right)^{\frac{1}{p}} .$$
	 As $q>p+2$, we obtain $\|u(t)\|_{{\alpha},{2}} \rightarrow \infty$ as $t \uparrow T_{\max }$. One shows by the same argument that if $T_{\min }<\infty$, then $\|u(t)\|_{{\alpha},{2}} \rightarrow \infty$ as $t \downarrow T_{\min }$.\\
	ii) If $p=\frac{4}{d+2\alpha+2}$, we show the blowup alternative by contradiction. Suppose that $T_{\max }<\infty$ and that $\|u\|_{L^{(p+2)}\left(\left(0, T_{\max }\right) ; L_{\alpha}^{(p+2)}\left(\mathbb{R}^{d+1}_+\right)\right)}<\infty .$ Let $0 \leq t \leq$ $t+\tau<T_{\max } .$ It follow that
	$$
	\mathcal{I}_{\alpha}(\tau) u(t, \cdot)=u(t+\tau, \cdot)-\int_{0}^{\tau} \mathcal{I}_{\alpha}(\tau-s) F(u(t+s, \cdot)) d s
	$$
	By Theorem \ref{th4} we deduce that there exists $C$ such that
	$$
	\begin{aligned}
	\left\|\mathcal{I}_{\alpha}(\cdot) u(t)\right\|_{L^{(p+2)}\left(\left(0, T_{\max }-t\right) ; L_{\alpha}^{(p+2)}\left(\mathbb{R}^{d+1}_+\right)\right)} & \leq\|u\|_{L^{(p+2)}\left(\left(t, T_{\max }\right) ; L_{\alpha}^{(p+2)}\left(\mathbb{R}^{d+1}_+\right)\right)} \\
	&+C\|u\|_{L^{(p+2)}\left(\left(t, T_{\max }\right) ; L_{\alpha}^{(p+2)}\left(\mathbb{R}^{d+1}_+\right)\right)}^{p+1}
	\end{aligned}
	$$
	Therefore for $t$ close enough to $T_{\max }$,
	$$
	\left\|\mathcal{I}_{\alpha}(\cdot) u(t)\right\|_{L^{(p+2)}\left(\left(0, T_{\max }-t\right) ; L_{\alpha}^{(p+2)}\left(\mathbb{R}^{d+1}_+\right)\right)} \leq \frac{M}{2}
	$$
	By Step 1 in the proof of Theorem $\ref{th5}, u$ can be extended beyong $T_{\max }$, which is a contradiction. This shows that $\|u\|_{L^{(p+2)}\left(\left(0, T_{\max }\right) ; L_{\alpha}^{(p+2)}\left(\mathbb{R}^{d+1}_+\right)\right)}=\infty .$ Let now $(q, r)$ be a sharp $\frac{d+2\alpha+2}{2}$-admissible pair such that $r>p+2 .$ It follows from Hölder's inequality that for any $T<T_{\max }$,
	$$
	\|u\|_{L^{(p+2)}\left((0, T) ; L_{\alpha}^{(p+2)}\left(\mathbb{R}^{d+1}_+\right)\right)} \leq\|u\|_{L^{\infty}\left((0, T) ; L_{\alpha}^{2}\left(\mathbb{R}^{d+1}_+\right)\right)}^{\lambda}\|u\|_{L^{q}\left((0, T) ; L_{\alpha}^{r}\left(\mathbb{R}^{d+1}_+\right)\right)}^{1-\lambda}
	$$
	with $\lambda=\frac{2(r-p-2)}{(p+2)(r-2)} .$ Letting $T \uparrow T_{\max }$, we obtain $\|u\|_{L^{q}\left(\left(0, T_{\max }\right) ; L_{\alpha}^{r}\left(\mathbb{R}^{d+1}_+\right)\right)}=\infty .$
	One shows by the same argument that if $T_{\min }<\infty$, then $\|u\|_{L^{q}\left(\left(-T_{\min }, 0\right) ; L_{\alpha}^{r}\left(\mathbb{R}^{d+1}_+\right)\right)}=$ $\infty$
\end{proof}

\end{document}